\crefname{hypothesis}{Hypothesis}{Hypotheses}
\title{A Galerkin Alternating Projection Method for Kinetic Equations in the Diffusive Limit 
\thanks{Submitted to the editors May 27, 2025.}
}
\author{
	Gianluca Ceruti\thanks{ University of Innsbruck (\email{Gianluca.Ceruti@uibk.ac.at}, \email{Lukas.Einkemmer@uibk.ac.at}).}
	\and Nicolas Crouseilles \thanks{Univ Rennes, Inria  (MINGuS),   CNRS,   IRMAR (UMR   6625) \& ENS Rennes,   France   (\email{nicolas.crouseilles@inria.fr}.}
	\and Lukas Einkemmer\footnotemark[2]}
\newcommand{\R}{{\mathbb R}}
\newcommand{\M}{{\mathcal M}}
\begin{document}

\maketitle

\begin{abstract}
The numerical approximation of high-dimensional evolution equations poses significant computational challenges, particularly in kinetic theory and radiative transfer. In this work, we introduce the Galerkin Alternating Projection (GAP) scheme, a novel integrator derived within the Dynamical Low-Rank Approximation (DLRA) framework. 
We perform a rigorous error analysis, establishing local and global accuracy using standard ODE techniques. Furthermore, we prove that GAP possesses the Asymptotic-Preserving (AP) property when applied to the Radiative Transfer Equation (RTE), ensuring consistent behavior across both kinetic and diffusive regimes. In the diffusive regime, the K-step of the GAP integrator directly becomes the limit equation. In particular, this means that we can easily obtain schemes that even in the diffusive regime are free of a CFL condition, do not require well prepared initial data, and can have arbitrary order in the diffusive limit (in contrast to the semi-implicit and implicit schemes available in the literature). Numerical experiments support the theoretical findings and demonstrate the robustness and efficiency of the proposed method.
\end{abstract}

\begin{keywords}
Kinetic equations, dynamical low-rank approximation, asymptotic-preserving methods, radiative transfer equation
\end{keywords}

\begin{MSCcodes}
  	65M70, 
	35Q20, 
	82C40, 
	65M12, 
	65L05, 
	65F30  
\end{MSCcodes}

\section{Introduction}

Kinetic equations play a central role in modeling the evolution of particle distributions in rarefied gases, radiative transfer, and plasma physics. These equations are inherently high-dimensional, often involving complex interactions between spatial and velocity/angle variables. A promising direction to mitigate these difficulties is the use of model reduction techniques. Among them, Dynamical Low-Rank Approximation (DLRA)~\cite{koch2007dynamical} has emerged as a powerful tool for reducing computational complexity by approximating the solution on a low-dimensional manifold. Several DLRA-based integrators have been proposed in recent years, including the projector-splitting integrator (PSI)~\cite{lubich2014projector, einkemmer2018low} and the Basis-Update Galerkin (BUG)~\cite{ceruti2022unconventional,ceruti2022rank,ceruti2024parallel,ceruti2024robust} scheme.

The presence of (possibly) small parameters, such as the Knudsen number, further leads to stiff regimes where traditional numerical methods become prohibitively expensive or unstable. In particular, capturing the correct asymptotic behavior as the system transitions from the kinetic to the diffusive regimes is a major computational challenge. Such methods are called \textit{asymptotic preserving} (AP). There are a number of works in the literature that consider dynamical low-rank AP schemes. In \cite{Ding2021} the authors considers a PSI based DLRA scheme for the radiative transfer equation, where all substeps are treated with a fully implicit scheme. It was rigorously shown in that work that the resulting scheme is asymptotic preserving in the diffusive limit. Fully implicit methods, however, can be expensive, and to overcome this a micro-macro decomposition based IMEX scheme was proposed in \cite{einkemmer2021asymptotic}. An added benefit of this is that the negative time step in the PSI is not an issue as the DLRA is only applied to the micro part. The problem of the negative time step can also be overcome by using the BUG integrator (which simply does not need a negative step). In \cite{einkemmer2024asymptotic} an IMEX scheme for the BUG integrator has been proposed that is asympotic preserving and energy stable. Despite being semi-implicit the IMEX schemes in \cite{einkemmer2021asymptotic} and \cite{einkemmer2024asymptotic} still need to satisfy a hyperbolic CFL condition in the kinetic regime and a parabolic CFL condition in the diffusive regime. Also note that even for fully implicit schemes there is no guarantee that we are unconditionally stable in the diffusive limit. Later asymptotic preserving DLRA schemes have also been developed for a range of other kinetic problems; see, e.g., \cite{einkemmer2021efficient} for the Boltzmann--BGK equation, \cite{Coughlin2022} for the Vlasov-Ampere-Fokker-Planck equation, \cite{Patwardhan2024} for the thermal radiative transfer equation, and \cite{einkemmer2025asymptotic} for the full Boltzmann equation. We also mention the work \cite{sands2024high} which considers a AP low-rank scheme in the step-and-truncate framework.

In this paper, we propose a robust dynamical low-rank integrator that consists of two steps that update the angle and spatial dependent low-rank factors, respectively. While this integrator can, in principle, be applied to arbitrary evolution equations, we focus here on kinetic equations with a collisional limit (for simplicity we will consider a diffusive limit). In this limit, the angle update performs a projection onto the appropriate angle subspace (thus, for well prepared initial data nothing is done). Moreover, the update of the spatial low-rank factors converges to the appropriate limit equation of the moments. Thus, in this limit, the scheme not only is asymptotic preserving, but the low-rank factors can be interpreted directly as the moments of the solution. We also note that the integrator is less computational demanding compared to both micro-macro DLRA approaches as no separate macro step is needed and also requires less code to implement than classic DLRA integrators (such as the PSI or the BUG integrator). We emphasize, in particular, that our scheme does not need to satisfy a CFL condition. This is in stark contrast to most schemes in the literature, where even for semi-implicit schemes a hyperbolic CFL condition in the kinetic regime and, more severe, a parabolic CFL condition in the diffusive regime needs to be satisfied.

The proposed integrator has similarities to other robust DLRA integrators proposed in the literature. In fact, it uses similar substeps compared to both the projector splitting and BUG integrator and similar to the BUG integrator it can be interpreted as a predict then Galerkin scheme. The difference is that the prediction here is performed only for the angle dependent low-rank factors and the Galerkin step is performed for the spatial low-rank factors (not only for the coefficients as is the case in BUG). A similar idea has been followed in deriving the XL integrator \cite{einkemmer2025asymptotic}. However, in this case, the collision is split from the advection and the XL integrator is then only applied to the collision. Since the collision operator only acts in velocity space, this simplifies the method. In contrast, our scheme applied to the full equation directly results in low-rank factors that can be interpreted as moments in the relevant limit. We also note that no convergence analysis is performed in \cite{einkemmer2025asymptotic}; however, the collision operator considered is significantly more complicated (full Boltzmann operator) than what we consider in this paper. In parallel, a similar type of integrator has been introduced in the context of gradient flow problems~\cite{kusch2025augmented}, where an augmented backward-corrected PSI (abcPSI) scheme was proposed as a variant of the standard projector-splitting integrator. While that work provides an effective framework for DLRA-based optimization, its setting and formulation differ substantially from the PDE-based perspective adopted here. Building on the approach initially proposed in~\cite{einkemmer2018low,bachmayr2021existence}, the present work introduces a unifying PDE-oriented formulation and asymptotic analysis framework for the proposed integrator.


We first introduce the new integrator and compare it with classical dynamical low-rank (DLRA) algorithms (Section~2). Then, in Section~3, we perform a rigorous convergence analysis for a generic problem, demonstrating that the proposed integrator exhibits error behavior comparable to that of classical DLRA schemes. In Section~4, we show that, when applied to the scaled RTE, the integrator is asymptotic-preserving and that the leading spatially dependent low-rank factor can be interpreted as the particle density. Section~5 discusses the discretization in space, angle, and time, while Section~6 presents numerical results that support and validate the theoretical findings. 

Throughout the text, \textbf{vector} and \textbf{matrix} quantities are denoted in bold, while scalar quantities are written in standard (non-bold) font.

\section{The Galerkin alternating projection scheme}

The numerical approximation of high-dimensional evolution equations is a fundamental challenge in computational mathematics, often represented by the abstract problem of determining the solution $f = f(t, \xi) \in \mathbb R$  of 
\begin{equation} \label{eq:fullEq}
	\partial_t f = \mathcal{L}(f), \quad f(t_0) = f_0 \in L^2(\Omega,\omega)\, ,
\end{equation}
where $\xi \in \Omega\subseteq \R^d$, and the function space $L^2(\Omega, \omega)$ consists of all functions that are square-integrable with respect to the measure $\omega$. Here, $\Omega$ may represent either a countable set (discrete case) or a subset of $\mathbb{R}^d$ with positive Lebesgue measure (continuous case). This allows us to treat the continuous case, where only a low-rank approximation is performed, and the discrete case, where in addition the problem is discretized in space and velocity, within the same general mathematical framework.
For clarity, we recall that given a measure $\omega$ on a set $\Omega$, the associated $\omega$-scalar product and the corresponding induced norm are defined as 
\[ 
    \langle f, g \rangle_{\omega}  := \int_{\Omega} f g \ d\omega, 
    \quad 
    \| f \|_{\omega} := \sqrt{\langle f, f\rangle_{\omega}} \, .
\] 
In this work, we focus on numerical approximations of \eqref{eq:fullEq}, with particular emphasis on the radiation transport equation (RTE). We introduce a novel dynamical low-rank integrator, called the GAP scheme, as an efficient method for solving problems arising in applications related to kinetic theory.

The GAP scheme belongs to the class of robust numerical integrators recently developed in the context of Dynamical Low-Rank Approximation (DLRA), an online model reduction technique for high-dimensional problems. To provide a self-contained discussion, we introduce key elements of DLRA in the subsequent section for readers unfamiliar with this framework.

\subsection{Dynamical low-rank approximation}
The numerical approximation of the abstract time-dependent differential equation \eqref{eq:fullEq} often encounters the curse of dimensionality, i.e.~an exponential growth in the number of parameters or degrees of freedom required to achieve an accurate numerical approximation of the analytical solution. Note that the latter is rarely available which necessitates appropriate numerical schemes. In this context, time-dependent model order reduction techniques are highly appealing.

Within this framework, we review the main elements and recent progress of a variational approach known as Dynamical Low-Rank Approximation (DLRA). The core idea of DLRA is to constrain the dynamics of \eqref{eq:fullEq} to an ansatz space $\mathcal M$, for which the numerical approximation benefits from reduced storage and computational complexity.

For notational simplicity, we focus on the two-variable case \( f=~f(t, \xi_1, \xi_2)\),  which we assume admits the following factorization: 
\begin{equation} \label{eq:lowRankf}
    f(t,\xi_1, \xi_2) =  \sum_{i,j}^r S_{ij}(t) X_i(t,\xi_1) V_j(t,\xi_2) \in \R \, .
\end{equation}
This example serves to illustrate the method, which extends naturally to functions of arbitrary dimension. In this factorization, the coefficient matrix \( S_{ij}(t) \in \mathbb{R}^{r \times r} \) captures the time-dependent interactions between the basis functions \( X_i(t, \xi_1) \) and \( V_j(t, \xi_2) \). The functions \( X_i \) and \( V_j \) form a time-evolving reduced basis that must be determined as part of the decomposition. 

The minimal number $r$ used in \eqref{eq:lowRankf} is referred to as the \emph{rank} of the function $f \in L^2(\Omega, \omega)$. The corresponding ansatz set $\mathcal{M}_r^\omega$ consists of $L^2(\Omega, \omega)$-functions that admit a rank-$r$ representation and is defined as 
\begin{equation}
    \M_r^\omega := \{ f \in L^2(\Omega, \omega) \ | \ \text{rank}(f) = r \} \, .
\end{equation}
For reasons of brevity, we will omit the measure in the definition of the ansatz set if it is clear from the surrounding context. To summarize the key results available in the literature, we require the following assumptions:
\begin{enumerate}[label=(\alph*)]
    \item \emph{Product measure on a (compact) Cartesian product set:} 
        \[\omega = \omega_1 \cdot \omega_2 \quad \text{where} \quad \Omega = \Omega_1 \times \Omega_2 \, .\]
    \item \emph{Bounded and Lipschitz operator:} 
        \[ \| \mathcal{L}(f) \|_\omega \leq B \, , \quad
         \| \mathcal{L}(f) - \mathcal{L}(g)  \|_\omega \leq C \|f-g\|_\omega \, . \]
\end{enumerate}
The transition from global to local assumptions follows naturally; for simplicity, we adopt the global ones. Moreover, while assumption (b) is typically not satisfied by general operators, its discretized counterpart generally exhibits these properties.
While \eqref{eq:lowRankf} it is commonly used, for what follows it is convenient to reformulate the problem in linear algebra notation. This can be achieved by introducing the vector notation
\begin{align}  
  \label{vectorX}
  \mathbf X(t, \xi_1) &:= [ X_1(t, \xi_1) \ | \ X_2(t, \xi_1) \ | \ \dots \ | \ X_r(t, \xi_1)] \in \R^{1 \times r}, \\
  \label{vectorV}
   \mathbf V(t, \xi_2) &:= [ V_1(t, \xi_2) \ | \ V_2(t, \xi_2) \ | \ \dots \ | \ V_r(t, \xi_2)] \in \R^{1 \times r} \, .
\end{align}
Note that each element of these vectors are still functions in $t$ and either $\xi_1 \in \Omega_1$  or $\xi_2 \in \Omega_2 $. Thus, we obtain the matrix representation $f = \mathbf X \mathbf S \mathbf V^\top$,  which resembles a low-rank singular value decomposition. Indeed, If the sets $\Omega_1$ and $\Omega_2$ are \emph{discrete}, the ansatz set $\mathcal{M}_r$ reduces to the space of rank-$r$ matrices, which is the setting where DLRA was originally formulated in. As shown in \cite{koch2007dynamical}, $\mathcal{M}_r$ possesses a manifold structure, thus admitting local tangent spaces where an orthogonal projector can be defined. 

We now recall the construction of the orthogonal projector onto the tangent space. Let $f=\mathbf{XSV}^\top \in \mathcal{M}_r$, we introduce the intermediate projectors:
\begin{align}
    &\mathcal{P}_\mathbf{X}[\cdot] = \sum_{i=1}^r X_i \langle X_i, \cdot \rangle_{\omega_1}  = \mathbf{X} \langle \mathbf{X}^\top \ \cdot \rangle_{\omega_1}\, , \\
    &\mathcal{P}_\mathbf{V}[\cdot] = \sum_{j=1}^r \langle \cdot, V_j \rangle_{\omega_2 } V_j = \langle \cdot \ \mathbf{V} \rangle_{\omega_2} \mathbf{V}^\top \, ,
\end{align}
where, for a given function $g$ and measure $\omega$, we set $\langle g \rangle_\omega := \langle g, 1 \rangle_\omega$. When $g$ is vector-valued, all operations above are understood component-wise.
The orthogonal projection $\mathcal P(f)[\cdot]$ onto the tangent space $\mathcal{T}_f \mathcal{M}_r$ is then obtained as
\begin{equation}
\label{def:proj_tangent}
    \mathcal P(f)[\cdot] = \mathcal P_\mathbf{X}[\cdot] - \mathcal P_\mathbf{X} \circ \mathcal P_\mathbf{V}[\cdot]  + \mathcal P_\mathbf{V}[\cdot] \in \mathcal T_f \M_r  \, .
\end{equation}
Finally, the DLRA approximation of \eqref{eq:fullEq}, denoted by $y$, is obtained by constraining the dynamics to $\mathcal{M}_r$ through the projection of the right-hand side onto the tangent space $\mathcal{T}_f \mathcal{M}_r$, i.e.
\begin{equation} \label{eq:DLReq}
    \partial_t y = \mathcal P(y)  [ \mathcal L(y)], 
    \quad 
    y(t_0) = \Pi_{\M_r}(f_0) \in \M_r \, .
\end{equation}
where $\Pi_{\M_r}(f_0)$ denotes the best approximation of $f_0$ into $\M_r$. Before proceeding further, we observe that by construction the DLRA solution $y$ belongs to $\mathcal M_r$ and the following  holds 
\[
	\mathcal{L}(y) = \mathcal{P}(y)[\mathcal{L}(y)] + \mathcal{P}^\perp(y)[\mathcal{L}(y)] \, , 
\]
where $\mathcal P^\perp(y)$ denotes the orthogonal complement. To ensure that the DLRA approximation remains close to the full-order model \eqref{eq:fullEq}, a model-error assumption on the orthogonal complement must be introduced. Therefore, following the DLRA literature, we complement assumptions (a) and (b) with the following assumptions:
\begin{enumerate}[label=(\alph*), start=3]
    \item \emph{Model order assumption:} The projection of \( \mathcal{L}(f) \) onto the orthogonal complement of the tangent space is small, ensuring the DLRA approximation remains close to the full-order model:  
        \[ \| \mathcal P^\perp(f)[\mathcal{L}(f)] \|_\omega \leq \eta \qquad \forall f \in \mathcal M_r^\omega \, .\] 
    \item \emph{Small initial value error:} The initial value error 
    \[ \delta_0 := \| f_0 - \Pi_{\M_r}(f_0) \|_\omega\] for the DLRA evolution equation \eqref{eq:DLReq} is assumed to be small.
\end{enumerate}


\subsection{Projector-splitting integrator}
While DLRA provides an effective framework for reducing computational complexity, the projection onto the tangent space can exhibit large local Lipschitz constants, which scale with the local curvature of the manifold. In practical computations this can usually not be avoided because we often want to run the algorithm in a regime where at least one singular value of $\bf{S}$ is small. This sensitivity poses numerical challenges, necessitating the development of robust integrators that remain stable regardless of the curvature of the ansatz set \( \mathcal{M}_r \).  

To address this issue, significant efforts have been devoted to designing and analyzing numerical schemes that circumvent these difficulties. One of the first and most widely studied approaches is the Projector-Splitting Integrator (PSI). This method avoids direct projection onto the tangent space at each step and instead decomposes the evolution into substeps that operate on individual components of the low-rank factorization. 
The projector splitting integrator is inspired by a natural decomposition of the projection operator onto the tangent space, as given in equation \eqref{def:proj_tangent}.

One timestep from $t=t_0$ to $t=t_0+\Delta t$ of the PSI integrator with initial value $f(t_0) \in \mathcal M_r^\omega$ can be formulated as follows:  

\begin{algorithm}[H]
    \caption{The Projector-Splitting Integrator scheme (PSI)}
    \begin{algorithmic}
        \STATE{\textbf{Input:} $f(t_0) = \mathbf{X}_0 \mathbf{S}_0 \mathbf{V}_0^\top \in \mathcal M_r^\omega \, .$}
        
        \STATE{}
        \STATE{
            \qquad Solve from time $t=t_0$ until $t = t_0 + \Delta t$ 
            $$\partial_t \ell = \mathcal P_{\mathbf X_0} [\mathcal L(\ell)], \quad \ell(t_0) =  f(t_0)\, .$$
            \qquad Set $\mathbf V_1$ to be an orthonomal basis of $\ell(t_0 + \Delta t)$ w.r.t. to $\omega_2$.
            \\ \ \\
            }
        \STATE{
            \qquad Solve from time $t=t_0$ until $t = t_0 + \Delta t$ 
            $$\partial_t s = -\mathcal  P_{\mathbf X_0} \circ P_{ \mathbf V_1}  [\mathcal L(s)], \quad s(t_0) = \ell(t_0 + \Delta t) \, .$$ 
            }
        \STATE{
            \qquad Solve from time $t=t_0$ until $t = t_0 + \Delta t$ 
            $$\partial_t k = \mathcal P_{\mathbf V_1} [ \mathcal L(k)], \quad k(t_0) = s(t_0 + \Delta t) \, .$$
            \qquad Set $\mathbf X_1$ to be an orthonomal basis of $k(t_0 + \Delta t)$ w.r.t. to $\omega_1$. 
            \\ \ \\
            }
        
        \RETURN{$k(t_0 + \Delta t) = \mathbf X_1 \mathbf S_1 \mathbf V_1^\top$ where $ \mathbf S_1 = \langle \mathbf X_1^\top  k(t_0 + \Delta t)   \mathbf V_1\rangle_\omega$ . 	
        }
    \end{algorithmic}
    \label{algoPSI}
\end{algorithm}


Before proceeding further, we clarify the meaning of the orthonormal basis set appearing at the end of steps one and three of the algorithm. To establish this interpretation and further exploit analogies with the classical formulation of the PSI, as presented in~\cite{lubich2014projector}, we observe that the following results hold.

\begin{lemma}
    The solution of the $\ell$-step of the PSI algorithm  remains in the span of $\mathbf{X_0}$. 
\end{lemma}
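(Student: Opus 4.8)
The plan is to isolate the component of $\ell$ that is $\omega_1$-orthogonal to the span of $\mathbf{X}_0$ and show it is identically zero. Let $\mathcal{P}_{\mathbf{X}_0}^\perp := \mathrm{Id} - \mathcal{P}_{\mathbf{X}_0}$ be the orthogonal complement of $\mathcal{P}_{\mathbf{X}_0}$ with respect to the $\omega_1$-inner product, and define $g(t) := \mathcal{P}_{\mathbf{X}_0}^\perp[\ell(t)]$. Since the basis $\mathbf{X}_0$ is fixed in time, $\mathcal{P}_{\mathbf{X}_0}$ and hence $\mathcal{P}_{\mathbf{X}_0}^\perp$ are fixed, bounded, linear operators, so differentiation in $t$ commutes with them: $\partial_t g = \mathcal{P}_{\mathbf{X}_0}^\perp[\partial_t \ell]$.

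Substituting the $\ell$-equation $\partial_t \ell = \mathcal{P}_{\mathbf{X}_0}[\mathcal{L}(\ell)]$ gives $\partial_t g = \mathcal{P}_{\mathbf{X}_0}^\perp \circ \mathcal{P}_{\mathbf{X}_0}[\mathcal{L}(\ell)]$, and because $\mathcal{P}_{\mathbf{X}_0}$ is an orthogonal projector one has $\mathcal{P}_{\mathbf{X}_0}^\perp \circ \mathcal{P}_{\mathbf{X}_0} = 0$, hence $\partial_t g \equiv 0$ on $[t_0, t_0 + \Delta t]$. For the initial value, $g(t_0) = \mathcal{P}_{\mathbf{X}_0}^\perp[\mathbf{X}_0 \mathbf{S}_0 \mathbf{V}_0^\top] = 0$, since $\mathbf{X}_0 \mathbf{S}_0 \mathbf{V}_0^\top$ lies (columnwise in $\xi_2$) in the span of $\mathbf{X}_0$ and is therefore fixed by $\mathcal{P}_{\mathbf{X}_0}$. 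Combining the two facts, $g(t) = 0$ for all $t \in [t_0, t_0 + \Delta t]$, i.e.
\[
  \ell(t) = \mathcal{P}_{\mathbf{X}_0}[\ell(t)] = \mathbf{X}_0 \, \langle \mathbf{X}_0^\top \ell(t) \rangle_{\omega_1} = \mathbf{X}_0 \, \mathbf{L}(t)^\top,
\]
with $\mathbf{L}(t,\xi_2)^\top := \langle \mathbf{X}_0^\top \ell(t,\cdot,\xi_2) \rangle_{\omega_1} \in \R^{r \times 1}$, which is precisely the statement that $\ell$ remains in the span of $\mathbf{X}_0$.

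I do not expect a genuine obstacle here: the lemma is essentially the algebraic identity $\mathcal{P}^\perp \circ \mathcal{P} = 0$ together with the observation that the initial datum already lies in the relevant subspace. The only points meriting a word of care are (i) the interchange of $\partial_t$ with the fixed bounded linear operator $\mathcal{P}_{\mathbf{X}_0}^\perp$, and (ii) that a (unique) differentiable solution $\ell$ of the $\ell$-step exists on $[t_0, t_0 + \Delta t]$, which follows from the Lipschitz assumption~(b) via Picard--Lind\"elof; an alternative route is to verify directly that $\mathbf{X}_0 \mathbf{L}(t)^\top$ with $\mathbf{L}$ solving the reduced equation $\partial_t \mathbf{L}^\top = \langle \mathbf{X}_0^\top \mathcal{L}(\mathbf{X}_0 \mathbf{L}^\top) \rangle_{\omega_1}$ solves the $\ell$-step, and then invoke uniqueness.
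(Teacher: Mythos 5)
Your proof is correct and rests on exactly the same two facts as the paper's: the initial datum $f(t_0)=\mathbf X_0\mathbf S_0\mathbf V_0^\top$ already lies in the range of $\mathcal P_{\mathbf X_0}$, and the right-hand side $\mathcal P_{\mathbf X_0}[\mathcal L(\ell)]$ does too. The paper phrases this by writing $\ell(t)=f(t_0)+\int_{t_0}^t\mathcal P_{\mathbf X_0}[\mathcal L(\ell)]$ and pulling the fixed projector out of the integral, whereas you show the orthogonal component $g=\mathcal P_{\mathbf X_0}^\perp[\ell]$ has vanishing derivative and vanishing initial value; these are trivially equivalent formulations of the same argument.
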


\begin{proof}
    We observe that
    \[
        \ell(t) 
            = f(t_0) + \int_{t_0}^{t} \partial_t \ell 
            = \mathcal P_{\mathbf X_0}[f(t_0)] + \int_{t_0}^{t} \mathcal P_{\mathbf X_0}[ \mathcal L(\ell)] 
            = \mathcal P_{\mathbf X_0}\Big[ f(t_0) + \int_{t_0}^{t} \mathcal L(\ell) \Big] \, .
    \]
    Thus, we obtain that the solution at time $t$ of the $\ell$-step lies in the span of $ \mathbf {X_0}$, i.e.
    \[
        \ell(t) = \mathcal P_{\mathbf X_0}[ \ell(t)] \, .
    \]
\end{proof}

By an analogous proof, we obtain a similar result for the \( s \)-step, where the solution remains in the span of \( \mathbf X_1 \) and \( \mathbf V_0 \), and for the \( k \)-step, where the solution remains in the span of \( \mathbf V_1 \). This allows us to make two key observations. First, due to the preceding lemma, the standard formulation of the projector-splitting integrator coincides with the one provided above, i.e.,
\[
    \partial_t \ell 
        = \mathcal P_{\mathbf X_0}\Big[ \mathcal L \big( \mathcal P_{\mathbf X_0}[\ell] \big) \Big] 
        = \mathcal P_{\mathbf X_0}[ \mathcal L\big( \ell \big) ] \, .
\]
The same holds for the the $s$-step and $k$-step. While the first equality provides the formulation used in the original PSI algorithm and is more precise and effective for obtaining the time-evolving factors, we maintain the compact representation from the last equation throughout the manuscript, as it is more convenient for analysis. 

Second, we observe that, due to the preceding lemma, there exists a set of \( r \) basis functions, which we denote by the vector \( \mathbf L = \mathbf L(t, \xi_2) \in \mathbb{R}^{1 \times r} \), such that  
\[
	\ell(t_0 + \Delta t) = \mathbf X_0 \mathbf L^\top\, .
\]  
Thus, a set of \( r \) basis functions orthonormal with respect to the measure \( \omega_2 \) can be obtained, for example, using a Gram-Schmidt orthonormalization algorithm applied to the basis functions derived from the vector \( \mathbf L \).

\subsection{The novel GAP scheme}

The PSI method has marked a significant milestone in the development of robust numerical integrators for DLRA, its effectiveness has been supported by a successful error analysis in the discrete setting, demonstrating independence from the geometry of the low-rank manifold~\cite{kieri2016discretized}, as well as extensive numerical evidence of its accuracy and efficiency in complex dynamical problems. However, its backward intermediate sub-step has the drawback of inducing additional instabilities in dissipative and collisional  problems \cite{einkemmer2025asymptotic}, even in relatively simple cases such as the heat equation. To address this issue, a novel class of algorithms, referred to as Basis and Update Galerkin (BUG), has been recently introduced in the literature.

The BUG numerical integrator was the first to eliminate the backward instability and is derived as a variant of the PSI scheme. For completeness, we recall the scheme as originally presented in~\cite{lubich2014projector}. A single time-integration step proceeds as follows:
 \begin{algorithm}[H]
 \label{algoBUG}
     \caption{The Basis and Update Galerkin scheme (BUG)}
     \begin{algorithmic}
     	\STATE{\textbf{Input:} $f(t_0) = \mathbf X_0 \mathbf S_0 \mathbf V_0^\top \in \mathcal M_r^\omega \, .$}
         \STATE{}
         \STATE{
             \qquad Solve from time $t=t_0$ until $t = t_0 + \Delta t$ 
             $$\partial_t \ell = \mathcal P_{\mathbf X_0} [\mathcal L(\ell)], \quad \ell(t_0) = f(t_0) \, .$$
             \qquad Set $\mathbf V_1$ to be an orthonomal basis of $\ell(t_0 + \Delta t)$ w.r.t. to $\omega_2$.
             \\ \ \\
             }  
         \STATE{
             \qquad Solve from time $t=t_0$ until $t = t_0 + \Delta t$ 
             $$\partial_t k = \mathcal P_{\mathbf V_0} [ \mathcal L(k)], \quad k(t_0) = f(t_0) \, .$$
             \qquad Set $\mathbf X_1$ to be an orthonomal basis of $k(t_0 + \Delta t)$ w.r.t. to $\omega_1$. 
             \\ \ \\
             }   
         \STATE{
             \qquad Solve from time $t=t_0$ until $t = t_0 + \Delta t$ 
             $$\partial_t s = \mathcal  \mathcal P_{\mathbf X_1} \circ \mathcal P_{\mathbf V_1}  [\mathcal L(s)], \quad s(t_0) = \mathcal P_{\mathbf X_1} \circ \mathcal P_{\mathbf V_1} [f(t_0)] \, .$$ 
             }
         \RETURN{$s(t_0 + \Delta t) = \mathbf X_1 \mathbf S_1 \mathbf V_1^\top$ where $ \mathbf S_1 = \langle \mathbf X_1^\top s(t_0 + \Delta t) \mathbf V_1\rangle_\omega$ . 
         }
     \end{algorithmic}
 \end{algorithm}

The BUG integrator eliminates the backward sub-step of the PSI scheme and introduces greater parallelism. Additionally, we note that any potential errors arising from an incorrect initial value at the last step, where \( \mathcal P_{X_1} \circ \mathcal{P}_{V_1}[f(t_0)] \) may differ from \( f(t_0) \), can be corrected through augmentation strategies~\cite{ceruti2022rank}.  

However, while the PSI scheme is derived from a projector-splitting approach, which induces favorable properties such as the construction of high-order schemes via composition~\cite{lubich2014projector,hairer2006geometric} and the preservation of physical invariants for the Schrödinger or Vlasov-Poisson equation~\cite{lubich2015time,einkemmer2019quasi}, the BUG integrator is an artificially constructed variant with no a priori well-defined structure. This makes its analysis and theoretical properties, though available, less immediately accessible.

In this manuscript, we focus exclusively on the fixed-rank setting and introduce a novel algorithm that combines and balances the strengths and limitations of both the BUG and PSI integrators. Therefore, we present the following novel GAP scheme:

\begin{algorithm}[H]
\label{algoGAP}
    \caption{The Galerkin Alternating Projection scheme (GAP)}
    
    \begin{algorithmic}
        \STATE{\textbf{Input:} $f(t_0) = \mathbf X_0 \mathbf S_0 \mathbf V_0^\top \in \mathcal M_r^\omega \, .$}
        \STATE{}
        \STATE{
            \qquad Solve from time $t=t_0$ until $t = t_0 + \Delta t$ 
            $$\partial_t \ell = \mathcal P_{\mathbf X_0} [\mathcal L(\ell)], \quad \ell(t_0) = f(t_0) \, .$$
            \qquad Set $\mathbf V_1$ to be an orthonomal basis of $\ell(t_0 + \Delta t)$ w.r.t. to $\omega_2$.
            \\ \ \\
            }    
        \STATE{
            \qquad Solve from time $t=t_0$ until $t = t_0 + \Delta t$ 
            $$\partial_t k = \mathcal P_{\mathbf V_1} [ \mathcal L(k)], \quad k(t_0) = \mathcal P_{\mathbf V_1}[f(t_0)] \, .$$
            \qquad Set $\mathbf X_1$ to be an orthonomal basis of $k(t_0 + \Delta t)$ w.r.t. to $\omega_1$. 
            \\ \ \\
            }
       	\RETURN{$k(t_0 + \Delta t) = \mathbf X_1 \mathbf S_1 \mathbf V_1^\top$ where $ \mathbf S_1 = \langle \mathbf X_1^\top k(t_0 + \Delta t) \mathbf V_1\rangle_\omega$ . 
    }
    \end{algorithmic}
\end{algorithm}


\section{Error analysis of the GAP scheme}
This section is devoted to the analysis of the time accuracy of the proposed GAP algorithm. Since these schemes are inherently time-marching methods, we temporarily omit the influence of additional discretization effects and focus solely on their time accuracy. We begin by recalling the local error of the projector-splitting integrator. 
To fix notation, we denote by \( y^{\mathrm{PSI}}(t_0 + \Delta t) \) the numerical solution obtained by applying Algorithm~\ref{algoPSI} to the DLRA equation~\eqref{eq:DLReq} on the interval \([t_0, t_0 + \Delta t]\), and similarly by \( y^{\mathrm{GAP}}(t_0 + \Delta t) \) the solution computed with Algorithm~\ref{algoGAP}. The quantities \( \eta \) and \( \delta_0 \) appearing in the error estimates are those defined in the DLRA assumptions~(c) and~(d), respectively.

 \begin{lemma}[PSI Local error {\cite{kieri2016discretized}}] \label{lemma:PSIerror}
	The local error of the PSI approximation at time \( t_1 = t_0 + \Delta t \) satisfies
    \[
        \|  y^{\footnotesize \rm{PSI}}(t_1) - f(t_1) \| \lesssim \Delta t^2 + \eta \Delta t  + \delta_0 \Delta t\, .
    \]
\end{lemma}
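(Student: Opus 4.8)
Since this is the classical local-error estimate for the projector-splitting integrator, the plan is to follow the strategy of \cite{kieri2016discretized} (building on \cite{lubich2014projector}), adapted to the functional-analytic notation used here. Write $f$ for the exact solution of the full problem \eqref{eq:fullEq} and recall that one PSI step is essentially one Lie--Trotter step for the tangent-projected flow $\partial_t y = \mathcal P(y)[\mathcal L(y)]$, split into the three pieces corresponding to the summands of \eqref{def:proj_tangent}. The first move is to replace the full solution by a \emph{perturbed} DLRA flow: using \eqref{def:proj_tangent} write $\mathcal L(f) = \mathcal P(f)[\mathcal L(f)] + d$ with $d := \mathcal P^\perp(f)[\mathcal L(f)]$, so that $\|d\|_\omega \le \eta$ by assumption~(c). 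Hence $f$ solves \eqref{eq:DLReq} up to the defect $d$, and the Lipschitz property~(b) together with a Gronwall argument gives, for the exact DLRA solution $y$ launched from the same rank-$r$ datum used to start the PSI,
\[
  \| y(t) - f(t) \|_\omega \;\le\; e^{C(t-t_0)}\bigl( \,\|y(t_0)-f(t_0)\|_\omega + \eta\,(t-t_0)\,\bigr), \qquad t\in[t_0,t_1] .
\]
Bounding the initial mismatch by $\delta_0$ (assumption~(d)), this step accounts for the $\eta\Delta t$ and the $\delta_0$-dependent terms in the estimate and reduces the claim to bounding the pure splitting error $\|y^{\mathrm{PSI}}(t_1)-y(t_1)\|_\omega$ of one PSI step applied to \eqref{eq:DLReq}.

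The heart of the proof is then to show $\|y^{\mathrm{PSI}}(t_1)-y(t_1)\|_\omega \lesssim \Delta t^2$ with a constant \emph{independent of the curvature} of $\mathcal M_r$, i.e.\ of $\sigma_{\min}(\mathbf S)$. I would run the standard Lie-splitting expansion: Taylor-expand each of the three substep flows of Algorithm~\ref{algoPSI} to second order in $\Delta t$, using that the substep solutions stay in the span of the corresponding frozen bases (the lemma just established for the $\ell$-step, and its analogues for the $s$- and $k$-steps), so that the zeroth- and first-order terms telescope and reconstruct the exact one-step map of $\partial_t y = \mathcal P(y)[\mathcal L(y)]$, while the remainder is controlled by $\|\mathcal L\|_\omega \le B$, the Lipschitz constant $C$, and the short-time Lipschitz dependence of $\mathbf V_1,\mathbf X_1$ on the substep data. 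The structural input that keeps the (otherwise curvature-dependent) commutator terms from entering is the \emph{exactness property} of the PSI: if $\mathcal L$ were tangential along the trajectory, $\mathcal L(y)\in\mathcal T_y\mathcal M_r$, then $\mathcal P(y)[\mathcal L(y)]=\mathcal L(y)$ and the PSI would be exact; thus only the non-tangential part of $\mathcal L$ — which has size $\eta$ — can interact with the small singular values, contributing merely $O(\eta\Delta t)$. Combining this estimate with the Gronwall bound above via the triangle inequality yields the stated local error.

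The delicate point — and the part I would essentially import verbatim from \cite{kieri2016discretized} — is precisely this curvature-independence. A naive second-order Taylor estimate produces commutators of the frozen-basis projected vector fields whose Lipschitz constants scale like $\sigma_{\min}(\mathbf S)^{-1}$ and therefore blow up in exactly the regime the scheme is designed for (where some singular values of $\mathbf S$ are tiny). Removing this dependence requires the exactness property together with careful tracking of which subspaces the intermediate quantities live in, so that the full effect of the small singular values is routed through the $O(\eta)$ non-tangential defect rather than through the $\Delta t^2$ term; the remaining ingredients — the Gronwall estimates, the Taylor remainders, and the propagation of the initial mismatch $\delta_0$ — are routine.
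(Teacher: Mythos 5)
The paper does not prove this lemma at all --- it is imported wholesale from \cite{kieri2016discretized} --- so the relevant comparison is with the proof in that reference, and there your sketch diverges in a way that matters. Your first reduction step compares $f$ with the \emph{exact} solution $y$ of the tangent-projected equation $\partial_t y = \mathcal P(y)[\mathcal L(y)]$ via Gronwall. This cannot work with curvature-independent constants: the Gronwall argument needs the map $g \mapsto \mathcal P(g)[\mathcal L(g)]$ to be Lipschitz, and the Lipschitz constant of $g \mapsto \mathcal P(g)$ on $\mathcal{M}_r$ scales like $\sigma_{\min}(\mathbf S)^{-1}$ (the classical Koch--Lubich curvature bound); moreover the exact DLRA solution need not exist, or be controllable, up to $t_1$ when a singular value is small. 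You correctly diagnose that naive commutator estimates in the splitting expansion carry this $\sigma_{\min}^{-1}$ factor, but your own opening Gronwall step carries it too, so the reduction does not actually isolate a harmless $O(\Delta t^2)$ splitting error. There is also a smaller technical problem: $\mathcal P(f)$ in your defect decomposition $\mathcal L(f) = \mathcal P(f)[\mathcal L(f)] + d$ is undefined, since the exact solution $f(t)$ leaves $\mathcal{M}_r$ and assumption (c) is only stated on $\mathcal{M}_r^\omega$.

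The actual argument of \cite{kieri2016discretized} never touches the continuous DLRA flow or the moving projection $\mathcal P(\cdot)$. Each PSI substep is an ODE whose right-hand side is $\mathcal L$ composed with projections onto \emph{fixed} subspaces ($\mathbf X_0$, $\mathbf V_1$), which are non-expansive; one compares the substep solution directly with the correspondingly projected copy of the exact solution $f(t)$ (e.g.\ $\mathcal P_{\mathbf X_0}[f(t)]$ for the $\ell$-step), propagates the resulting defects from substep to substep, and uses the exactness property of the PSI on rank-$r$ paths to see that the leading terms cancel. The $\eta\Delta t$ term arises because $f(t)$ drifts away from the frozen subspaces by at most $O(\eta + B\Delta t)$ over one step (exactly the mechanism quantified in Lemma~\ref{lemma:projErr} of this paper), not through an a priori bound on $\|y - f\|$. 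So while you have named the right ingredients (exactness, defect decomposition, curvature-independence as the crux), the architecture you propose would reintroduce the very $\sigma_{\min}^{-1}$ dependence the lemma is designed to exclude, and the part you defer ``verbatim'' to the reference is in fact the entire proof.
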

A key step in the analysis is to control the time variation of the projection error. This is achieved by exploiting the Lipschitz continuity of the exact solution, as established in the following lemma.

\begin{lemma} \label{lemma:projErr}
	Let \( \Phi_{\mathbf{V}}(t) := \| f(t) - \mathcal{P}_{\mathbf{V}}[f(t)] \| \), where \( \mathcal{P}_{\mathbf{V}} \) denotes the orthogonal projector onto a fixed subspace \( \mathbf{V} \), and \( f \) is the solution of \eqref{eq:fullEq}. Then \( \Phi_{\mathbf{V}} \) is Lipschitz continuous on \( [t_0, t_1] \), and the following bound holds
	\begin{equation} \label{eq:projLipError}
		\Phi_{\mathbf{V}}(s) \leq \Phi_{\mathbf{V}}(t_1) + 2B |t_1 - s| \qquad \forall s \in [t_0, t_1] \,,
	\end{equation}
	where \( B \) is the bound on the operator \( \mathcal{L} \) as specified in DLRA assumption~(b).
\end{lemma}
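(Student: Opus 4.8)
The plan is to reduce the claim to two ingredients: first, that $\Phi_{\mathbf V}$ does not change too quickly in time (a Lipschitz-type bound), and second, that this Lipschitz bound, once established, immediately yields the one-sided estimate \eqref{eq:projLipError}. The key observation is that the map $f \mapsto f - \mathcal P_{\mathbf V}[f]$ is the orthogonal projector $\mathcal P_{\mathbf V}^\perp$ onto the orthogonal complement of $\mathbf V$, hence it is linear and has operator norm $1$ in the $\omega$-norm. Therefore, for any $s, t \in [t_0, t_1]$,
\[
    \big| \Phi_{\mathbf V}(t) - \Phi_{\mathbf V}(s) \big|
        = \big| \, \| \mathcal P_{\mathbf V}^\perp[f(t)] \| - \| \mathcal P_{\mathbf V}^\perp[f(s)] \| \, \big|
        \leq \| \mathcal P_{\mathbf V}^\perp[f(t) - f(s)] \|
        \leq \| f(t) - f(s) \| \, ,
\]
where the first inequality is the reverse triangle inequality and the second uses that an orthogonal projector is a contraction. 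So it remains to bound $\| f(t) - f(s) \|$.

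For that, I would use the fact that $f$ solves \eqref{eq:fullEq}, so that $f(t) - f(s) = \int_s^t \mathcal L(f(\tau))\, d\tau$, and invoke DLRA assumption~(b), which gives $\| \mathcal L(f(\tau)) \|_\omega \leq B$ uniformly. Hence
\[
    \| f(t) - f(s) \| \leq \int_s^t \| \mathcal L(f(\tau)) \| \, d\tau \leq B\, |t - s| \, .
\]
Combining the two displays shows $\Phi_{\mathbf V}$ is Lipschitz on $[t_0, t_1]$ with constant $B$ — in fact this already gives the sharper bound $\Phi_{\mathbf V}(s) \leq \Phi_{\mathbf V}(t_1) + B|t_1 - s|$, and a fortiori the stated bound with the constant $2B$. (The slack factor of $2$ is presumably kept for uniformity with later estimates, or as a safety margin in case one wants to accommodate a less sharp norm bound on the increment; I would state the proof to deliver the factor $2B$ as claimed, noting that even $B$ suffices.)

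I do not anticipate a genuine obstacle here: the only point that deserves care is the justification that $t \mapsto f(t)$ is (absolutely) continuous and admits the integral representation $f(t) - f(s) = \int_s^t \mathcal L(f(\tau))\,d\tau$, which follows from $f$ being a solution of \eqref{eq:fullEq} together with the boundedness in (b) guaranteeing the integrand is in $L^1$; the Lipschitz constant $C$ from (b) is not even needed for this lemma. One should also make explicit that $\mathbf V$ here is a fixed subspace (independent of time), so that $\mathcal P_{\mathbf V}^\perp$ is a single fixed linear operator and the norm-continuity argument above applies verbatim — this is exactly the hypothesis as stated.
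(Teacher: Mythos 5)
Your proof is correct and follows essentially the same route as the paper: reverse triangle inequality plus the contraction property of $\mathcal P_{\mathbf V}^\perp$, then the bound $\|f(t)-f(s)\|\leq B|t-s|$ from assumption (b). Your observation that the Lipschitz estimate already yields the sharper constant $B$ is also right --- the paper obtains the factor $2B$ from a separate three-term decomposition of $\Phi_{\mathbf V}(s)$ that is strictly weaker than what its own first step provides.
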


\begin{proof}
We estimate the Lipschitz regularity of \( \Phi_{\mathbf{V}} \) by rewriting it in terms of the orthogonal projector and applying the reverse triangle inequality:
	\begin{align*}
		|\Phi_{\mathbf{V}}(t) - \Phi_{\mathbf{V}}(s)|
		&= \left| \| \mathcal{P}_{\mathbf{V}}^\perp f(t) \| - \| \mathcal{P}_{\mathbf{V}}^\perp f(s) \| \right| \\
		&\leq \| \mathcal{P}_{\mathbf{V}}^\perp f(t) - \mathcal{P}_{\mathbf{V}}^\perp f(s) \| \\
		&= \| \mathcal{P}_{\mathbf{V}}^\perp (f(t) - f(s)) \| \\
		&\leq \| f(t) - f(s) \|.
	\end{align*}
	Since \( \partial_t f = \mathcal{L}(f) \) and \( \mathcal{L} \) is bounded, we have
	\[
	\| f(t) - f(s) \| \leq \int_s^t \| \partial_t f(\tau) \|\, d\tau = \int_s^t \| \mathcal{L}(f(\tau)) \|\, d\tau \leq B |t - s|,
	\]
	which implies
	\[
	| \Phi_{\mathbf{V}}(t) - \Phi_{\mathbf{V}}(s) | \leq B |t - s|.
	\]
	Hence, \( \Phi_{\mathbf{V}} \) is Lipschitz continuous on \( [t_0, t_1] \). Next, we estimate
	\begin{align*}
		\Phi_{\mathbf{V}}(s) 
		&= \| f(s) - \mathcal{P}_{\mathbf{V}}[f(s)] \| \\
		&= \| [f(s) - f(t_1)] + [f(t_1) - \mathcal{P}_{\mathbf{V}}[f(t_1)]] + [\mathcal{P}_{\mathbf{V}}[f(t_1)] - \mathcal{P}_{\mathbf{V}}[f(s)]] \| \\
		&\leq \| f(s) - f(t_1) \| + \| \mathcal{P}_{\mathbf{V}}[f(t_1)] - \mathcal{P}_{\mathbf{V}}[f(s)] \| + \Phi_{\mathbf{V}}(t_1) \\
		&\leq 2 \| f(s) - f(t_1) \| + \Phi_{\mathbf{V}}(t_1) \\
		&\leq 2 B |s - t_1| + \Phi_{\mathbf{V}}(t_1) \, .
	\end{align*}
	This concludes the proof of the Lipschitz regularity of \( \Phi_{\mathbf{V}} \).
\end{proof}

Based on the aforementioned lemmas, we derive the temporal local error analysis of the novel GAP scheme.

\begin{theorem}[GAP Local error]
   The local error of the novel GAP-algorithm  at time \( t_1 = t_0 + \Delta t \) satisfies
    \[
        \|   y^{\footnotesize \rm{GAP}}(t_1) - f(t_1)\| \lesssim \Delta t^2 + \eta \Delta t + \delta_0 \Delta t \, .
    \]
\end{theorem}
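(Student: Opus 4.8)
The plan is to compare the GAP solution with the PSI solution step by step, using Lemma~\ref{lemma:PSIerror} to absorb the full deviation of PSI from the exact solution $f$, and then to show that the modifications GAP makes to PSI (replacing the backward $s$-substep by a direct projection of the initial data, and reordering so the $k$-step uses $\mathbf{V}_1$) only perturb the result by $O(\Delta t^2 + \eta\Delta t + \delta_0 \Delta t)$. First I would note that both algorithms share an identical first $\ell$-substep, hence produce the same $\mathbf{V}_1$. The essential observation is then that the $k$-step of GAP, $\partial_t k = \mathcal{P}_{\mathbf{V}_1}[\mathcal{L}(k)]$ with $k(t_0) = \mathcal{P}_{\mathbf{V}_1}[f(t_0)]$, is exactly the evolution one obtains by projecting the full equation onto the fixed subspace $\mathbf{V}_1$; I would analyze this directly rather than through PSI.

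Concretely, I would proceed as follows. Let $g$ solve the reference problem $\partial_t g = \mathcal{P}_{\mathbf{V}_1}[\mathcal{L}(g)]$ with $g(t_0) = \mathcal{P}_{\mathbf{V}_1}[f(t_0)]$ — this is precisely the GAP $k$-step — and compare $g(t_1)$ with $f(t_1)$. Write $e(t) = g(t) - \mathcal{P}_{\mathbf{V}_1}[f(t)]$. Then $\partial_t e = \mathcal{P}_{\mathbf{V}_1}[\mathcal{L}(g)] - \mathcal{P}_{\mathbf{V}_1}[\partial_t f] = \mathcal{P}_{\mathbf{V}_1}[\mathcal{L}(g) - \mathcal{L}(f)] + \mathcal{P}_{\mathbf{V}_1}[\mathcal{L}(f) - \partial_t f]$; since $\partial_t f = \mathcal{L}(f)$ the last term vanishes, but $\mathcal{P}_{\mathbf{V}_1}$ does not commute with the flow, so I keep track of the defect $\mathcal{P}_{\mathbf{V}_1}[\mathcal{L}(f)] - \partial_t(\mathcal{P}_{\mathbf{V}_1} f) = -\mathcal{P}_{\mathbf{V}_1}^\perp[\mathcal{L}(f)]$ more carefully. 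Using the Lipschitz bound (b), $\|\partial_t e\| \le C\|e\| + C\Phi_{\mathbf{V}_1}(t) + (\text{projection defect terms})$, and the defect is controlled by $\eta$ (assumption (c), since $\mathcal{P}_{\mathbf{V}_1}^\perp$ is dominated by $\mathcal{P}^\perp(y)$ up to the manifold geometry) plus $\Phi_{\mathbf{V}_1}$. The key input is Lemma~\ref{lemma:projErr}: $\Phi_{\mathbf{V}_1}(s) \le \Phi_{\mathbf{V}_1}(t_1) + 2B|t_1 - s|$, and crucially $\Phi_{\mathbf{V}_1}(t_1)$ is small — here I would invoke that $\mathbf{V}_1$ was built from the $\ell$-step, which by the PSI local error analysis approximates $f(t_1)$ to within $\Delta t^2 + \eta\Delta t + \delta_0\Delta t$, so $\Phi_{\mathbf{V}_1}(t_1) \lesssim \Delta t^2 + \eta\Delta t + \delta_0\Delta t$. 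A Grönwall argument on $[t_0, t_1]$, together with the initial defect $\|e(t_0)\| = \|\mathcal{P}_{\mathbf{V}_1}[f(t_0)] - \mathcal{P}_{\mathbf{V}_1}[f(t_0)]\| = 0$, then gives $\|g(t_1) - \mathcal{P}_{\mathbf{V}_1}[f(t_1)]\| \lesssim (\eta + \delta_0 + \Delta t)\Delta t$. Finally $\|g(t_1) - f(t_1)\| \le \|g(t_1) - \mathcal{P}_{\mathbf{V}_1}[f(t_1)]\| + \Phi_{\mathbf{V}_1}(t_1)$, and both terms are $O(\Delta t^2 + \eta\Delta t + \delta_0\Delta t)$; since $y^{\mathrm{GAP}}(t_1) = k(t_1) = g(t_1)$, this is the claim.

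The main obstacle I anticipate is twofold. First, making rigorous the claim that $\Phi_{\mathbf{V}_1}(t_1)$ — the error of projecting $f(t_1)$ onto the span of $\mathbf{V}_1$ — inherits the PSI-type bound; this requires recalling from \cite{kieri2016discretized} (or redoing) the argument that the $\ell$-step output $\ell(t_1) = \mathbf{X}_0 \mathbf{L}^\top$ has column space close to that of $f(t_1)$, which is really where the model-error $\eta$ and the initial-data error $\delta_0$ enter. Second, care is needed in relating the subspace projector defect $\mathcal{P}_{\mathbf{V}_1}^\perp[\mathcal{L}(f)]$ to assumption (c), which is phrased in terms of the tangent-space projector $\mathcal{P}^\perp(y)$ on $\mathcal{M}_r$ rather than a single fixed-subspace projector; one uses $\mathcal{P}_{\mathbf{V}}^\perp = (\mathrm{Id} - \mathcal{P}_{\mathbf{X}})\mathcal{P}_{\mathbf{V}}^\perp + \ldots$ and the decomposition \eqref{def:proj_tangent} to bound this, at the cost of an extra $\Phi_{\mathbf{X}_0}$-type term, which is itself $O(\Delta t + \eta + \delta_0)$ by the same Lipschitz lemma applied to the $\mathbf{X}_0$ subspace. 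Everything else is a routine Grönwall estimate; I would keep the constants implicit via $\lesssim$ throughout, consistent with the statement.
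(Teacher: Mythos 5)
Your proposal follows essentially the same route as the paper: compare $y^{\mathrm{GAP}}(t_1)$ with $\kappa(t)=\mathcal{P}_{\mathbf V_1}[f(t)]$, control the resulting defect by the Lipschitz bound $C\,\Phi_{\mathbf V_1}(t)$ together with Lemma~\ref{lemma:projErr}, and feed in the smallness of $\Phi_{\mathbf V_1}(t_1)$ coming from the shared $\ell$-step with PSI. Two of the difficulties you flag are, however, non-issues. First, the ``projection defect'' you propose to track is identically zero: since $\mathcal{P}_{\mathbf V_1}$ is a \emph{fixed} linear projector it commutes with $\partial_t$, so $\partial_t(\mathcal{P}_{\mathbf V_1}f)=\mathcal{P}_{\mathbf V_1}[\partial_t f]=\mathcal{P}_{\mathbf V_1}[\mathcal{L}(f)]$ and your claimed identity $\mathcal{P}_{\mathbf V_1}[\mathcal{L}(f)]-\partial_t(\mathcal{P}_{\mathbf V_1}f)=-\mathcal{P}_{\mathbf V_1}^\perp[\mathcal{L}(f)]$ is false (the left-hand side vanishes); consequently no appeal to assumption~(c) or to the tangent-space decomposition \eqref{def:proj_tangent} is needed, and the only defect is $\mathcal{P}_{\mathbf V_1}[\mathcal{L}(f)-\mathcal{L}(\kappa)]$, which you already bound by $C\Phi_{\mathbf V_1}$. (The non-commutation with the \emph{flow} is precisely what the $\Phi_{\mathbf V_1}$ term accounts for.) Second, the bound $\Phi_{\mathbf V_1}(t_1)\lesssim\Delta t^2+\eta\Delta t+\delta_0\Delta t$ does not require redoing the analysis of the $\ell$-step from \cite{kieri2016discretized}: since PSI and GAP share the same first substep, $y^{\mathrm{PSI}}(t_1)$ lies in the span of $\mathbf V_1$, and the best-approximation property of the orthogonal projector gives $\|f(t_1)-\mathcal{P}_{\mathbf V_1}[f(t_1)]\|\le\|f(t_1)-y^{\mathrm{PSI}}(t_1)\|$, after which Lemma~\ref{lemma:PSIerror} applies directly. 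With these two simplifications your Gr\"onwall argument closes and reproduces the paper's proof.
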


\begin{proof} 
	Let \( \mathbf{V}_1 \) denote the right singular subspace generated in the first step of the GAP algorithm. We begin by considering the triangle inequality
    \[
        \| y^{\footnotesize \rm{GAP}}(t_1) - f(t_1)\| 
        \leq \| y^{\footnotesize \rm{GAP}}(t_1) -  \mathcal P_{\mathbf V_1}[f(t_1)] \| + \| \mathcal P_{\mathbf V_1}[f(t_1)] - f(t_1) \| \, .
    \]
	The second term is bounded using the sub-optimality of the PSI approximation, i.e.,
    \begin{equation} \label{eq:subOptPSI}
        \| \mathcal P_{\mathbf V_1}[f(t_1)] - f(t_1)\|
        \leq \|    \mathcal P_{\mathbf V_1}[ y^{\footnotesize \rm{PSI}}(t_1)] - f(t_1)  \| \lesssim \Delta t^2 + \eta \Delta t + \delta_0 \Delta t \, .
    \end{equation}
 	This estimate follows from the fact that both PSI and GAP share the same initialization step and thus generate the same subspace \( \mathbf{V}_1 \), so that \( \mathcal{P}_{\mathbf{V}_1}[y^{\rm PSI}(t_1)] = y^{\rm PSI}(t_1) \). Therefore, the sub-optimality bound is a direct consequence of the local approximation error of the PSI method, as stated in Lemma~\ref{lemma:PSIerror}. Next, we introduce the intermediate variable
    \[
            \kappa(t) := \mathcal P_{\mathbf V_1} [f(t)] \, .
    \]
    Up to a defect term, we can rewrite
    \[
        \mathcal L(f)= \mathcal L(\kappa) + d \, ,
    \]
    where $d := \mathcal L(f) - \mathcal L(\kappa)$. Due to the DLRA assumption~(b) on the operator $\mathcal L$, the defect is bounded by
    \[
        \| d(t) \| 
            = \| \mathcal L(f(t)) - \mathcal L(\kappa(t)) \| 
            \leq C \| f(t) - \kappa(t) \|  = C \Phi_{\mathbf{V}_1}(t) \, .
    \]
where \( \Phi_{\mathbf{V}_1}(t) \) is the projection error onto the fixed subspace \( \mathbf{V}_1 \), as introduced in Lemma~\ref{lemma:projErr}. Next, we observe that
\[
\partial_t \kappa 
= \mathcal{P}_{\bf V_1}\big[ \partial_t f \big] 
= \mathcal{P}_{\bf V_1} \big[ \mathcal L(f) \big] 
=  \mathcal{P}_{\bf V_1} \big[ \mathcal L(\kappa) \big] + \mathcal{P}_{\bf V_1} \big[ d \big] \, .
\]
As a result, we have
\begin{eqnarray*}
	\kappa(t_1) &=& \kappa(t_0) + \int_{t_0}^{t_1} \mathcal{P}_{\bf V_1} \big[ \mathcal L(\kappa(\tau)) \big] d\tau + \int_{t_0}^{t_1} \mathcal{P}_{\bf V_1} \big[ d(\tau) \big] d\tau  \\
		&=& y^{\rm GAP}(t_1) + \int_{t_0}^{t_1} \mathcal{P}_{\bf V_1} \big[ d(\tau) \big] d\tau \, .
\end{eqnarray*}
Here we used that \( y^{\rm GAP}(t_1) \) is defined as the solution of the projected equation without the defect term, i.e., the solution to \( \partial_t \kappa = \mathcal{P}_{\mathbf V_1}[ \mathcal{L}(\kappa) ] \) with initial condition \( \kappa(t_0) = \mathcal{P}_{\mathbf V_1}[f(t_0)] \). 
Since \( \kappa(t_1) = \mathcal P_{\mathbf V_1}[f(t_1)] \), it follows from~\eqref{eq:projLipError} that
\[
	\| \mathcal P_{\bf V_1}[f(t_1)] - y^{\rm GAP}(t_1) \|  \leq \int_{t_0}^{t_1} \| d(\tau) \| d\tau \leq C \int_{t_0}^{t_1} \Phi_{\mathbf{V}_1}(\tau) d\tau \, \lesssim  \Phi_{\mathbf{V}_1}(t_1) \Delta t + \Delta t^2 \, .
\]
The conclusion follows by observing that \( \Phi_{\mathbf{V}_1}(t_1) = \| f(t_1) - \mathcal{P}_{\mathbf{V}_1}[f(t_1)] \| \), which is of order \( \mathcal{O}(\Delta t^2) \) due to the sub-optimality of the PSI approximation, as shown in~\eqref{eq:subOptPSI}.
\end{proof}

Using a standard error propagation argument for ODEs~\cite{hairer1993solving} we obtain the following bound on the global error of the GAP algorithm.

\begin{corollary}[GAP Global Error]
    The error of the novel GAP algorithm at the final time $T = N\Delta t$, with $N \in \mathbb{N}$, satisfies
    \[
        \| y^{\rm{GAP}}(T) - f(T) \| \lesssim \Delta t + \eta + \delta_0 \, .
    \]
\end{corollary}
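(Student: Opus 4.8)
The plan is to obtain the global error bound as a standard consequence of the local error estimate, using the Lady Windermere's fan / telescoping argument for one-step methods as found in~\cite{hairer1993solving}. The key point is that the local error of the GAP scheme, as established in the preceding theorem, is of size $\mathcal{O}(\Delta t^2 + \eta \Delta t + \delta_0 \Delta t)$, and we must accumulate $N = T/\Delta t$ such local contributions while controlling how errors already present at time $t_n$ are propagated forward by one GAP step.

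First I would set up the notation: let $e_n := \| y^{\rm GAP}(t_n) - f(t_n) \|$ denote the global error at $t_n = n\Delta t$, where $y^{\rm GAP}(t_n)$ is the numerical solution after $n$ steps and $f(t_n)$ the exact solution of~\eqref{eq:fullEq}. Write $\Phi_{\Delta t}$ for the one-step map of the GAP scheme, so that $y^{\rm GAP}(t_{n+1}) = \Phi_{\Delta t}(y^{\rm GAP}(t_n))$. Decompose the error at $t_{n+1}$ via the triangle inequality into a \emph{local} part, $\| \Phi_{\Delta t}(f(t_n)) - f(t_{n+1}) \|$, which is controlled by the GAP local error theorem and is $\lesssim \Delta t^2 + \eta\Delta t + \delta_0\Delta t$, and a \emph{transported} part, $\| \Phi_{\Delta t}(y^{\rm GAP}(t_n)) - \Phi_{\Delta t}(f(t_n)) \|$, which measures how the one-step map amplifies the incoming error $e_n$.

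The transported part requires a Lipschitz (stability) estimate for the GAP one-step map of the form $\| \Phi_{\Delta t}(g) - \Phi_{\Delta t}(h) \| \leq (1 + L\Delta t)\, \| g - h \|$ for inputs $g, h \in \mathcal{M}_r^\omega$, with $L$ depending only on the constants $B$, $C$ in DLRA assumption~(b). This is the point I expect to be the main obstacle, and where I would invoke the structural similarity to the PSI and BUG analyses: each substep of GAP is either an orthogonal projection (an operation with norm $\le 1$) or the flow of an ODE $\partial_t u = \mathcal{P}_{\mathbf{W}}[\mathcal L(u)]$ whose right-hand side is Lipschitz with constant $C$ (since orthogonal projectors are non-expansive and $\mathcal L$ is $C$-Lipschitz), so Gr\"onwall gives each substep a Lipschitz constant $e^{C\Delta t} \le 1 + \mathcal{O}(\Delta t)$ over one step; the subspace-generation steps need a perturbation bound showing that nearby inputs yield nearby (up to a rotation that is absorbed by the subsequent Galerkin projection) subspaces — this is exactly the kind of estimate carried out in~\cite{kieri2016discretized} for PSI, and the same reasoning applies here because GAP shares the $\ell$-step with PSI and its $k$-step is a Galerkin step analogous to the one in BUG.

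Granting the stability estimate, the recursion $e_{n+1} \le (1 + L\Delta t)\, e_n + c(\Delta t^2 + \eta\Delta t + \delta_0\Delta t)$ unwinds by the discrete Gr\"onwall lemma to
\[
    e_N \le e^{L T}\, e_0 + \frac{e^{LT} - 1}{L\Delta t}\, c\,(\Delta t^2 + \eta\Delta t + \delta_0\Delta t) \lesssim \delta_0 + \Delta t + \eta + \delta_0,
\]
where $e_0 = \| y^{\rm GAP}(t_0) - f(t_0) \| = \| \Pi_{\mathcal M_r}(f_0) - f_0 \| = \delta_0$ by DLRA assumption~(d), and $T = N\Delta t$ is fixed so that $e^{LT}$ is an $\mathcal{O}(1)$ constant. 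Collecting terms yields $\| y^{\rm GAP}(T) - f(T) \| \lesssim \Delta t + \eta + \delta_0$, which is the claim. I would keep this argument brief in the final write-up, citing~\cite{hairer1993solving} for the telescoping/Gr\"onwall machinery and~\cite{kieri2016discretized} for the one-step stability of the projection-based substeps.
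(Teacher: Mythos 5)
Your proposal is correct and is precisely the ``standard error propagation argument for ODEs'' that the paper invokes (with a citation to Hairer et al.) without writing out: local error from the preceding theorem, a $(1+L\Delta t)$-stability bound for the one-step map, and discrete Gr\"onwall with $e_0=\delta_0$. You actually supply more detail than the paper does, correctly flagging the one-step Lipschitz stability of the substeps as the only ingredient needing justification beyond the local error theorem.
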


\section{Asymptotic-Preserving Property for Radiative Transfer}

In the following, we focus on the 1x1v scaled Radiative Transfer Equation (RTE), a fundamental model in kinetic theory that describes the transport of radiation in a scattering medium. Let \( f^\varepsilon = f^\varepsilon(t, x, \mu) \) denote its solution, governed by the equation:
\begin{equation} \label{eq:RTE}
    \partial_t f^\varepsilon + \frac{1}{\varepsilon}\mu \partial_x f^\varepsilon = \frac{1}{\varepsilon^2}(\rho^\varepsilon -f^\varepsilon) \quad \text{where} \quad \rho^\varepsilon = \frac{1}{2
    } \langle f^\varepsilon \rangle_\mu \, ,
\end{equation}
where \( x \in \Omega_x \subseteq \mathbb{R} \) and \( \mu \in [-1,1] \). Periodic boundary conditions are assumed unless stated otherwise. The parameter \( \varepsilon > 0 \) represents the Knudsen number, which characterizes the ratio between the mean free path of particles and a characteristic macroscopic length scale. 
This PDE can be embedded within the framework introduced above, where the product measure is defined as \( d\omega = dx \cdot d\mu \) on the domain \( \Omega := \Omega_x \times [-1,1] \). Within this setting, the operator \( \mathcal{L} \) is given by:
\begin{equation}
    \mathcal L(f^\varepsilon) = -\frac{1}{\varepsilon}\mu \partial_x f^\varepsilon + \frac{1}{\varepsilon^2} (\rho^\varepsilon - f^\varepsilon). 
\end{equation}
The parameter \( \varepsilon \) characterizes the scale of the system: the regime is \emph{diffusive} or \emph{collision dominated} when large scaling factors appear ($\varepsilon <\!<1$), and \emph{kinetic} otherwise $\varepsilon \approx \mathcal{O}(1)$. Consistent with the analysis in~\cite{larsen1974asymptotic, jin1999efficient, bouchut2000kinetic}, in the limit as \( \varepsilon \to 0 \), the radiative transfer equation is known to converge to the solution of
\begin{equation} \label{eq:RTE-AP}
    \partial_t \rho = \frac{1}{3} \partial_{xx} \rho \, \quad \text{with} \quad \rho = \rho(t,x) \, ,
\end{equation}
which we summarize in the following result.
\begin{theorem} [AP of RTE]
    Let $f^\varepsilon$ be the solution of \eqref{eq:RTE}. Then, 
    \[
        \lim_{\varepsilon \xrightarrow{} 0}  f^\varepsilon(t,x,\mu) =  \rho(t,x) \, ,
    \]
    where $\rho = \rho(t,x)$ denotes the solution of \eqref{eq:RTE-AP}.
\end{theorem}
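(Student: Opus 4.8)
The plan is to obtain the limit through a Hilbert (Chapman--Enskog) expansion of $f^\varepsilon$ in powers of the Knudsen number, and then to make the formal cascade rigorous by means of an $\varepsilon$-uniform energy estimate. First I would posit the ansatz
\[
    f^\varepsilon = f^{(0)} + \varepsilon f^{(1)} + \varepsilon^2 f^{(2)} + \mathcal{O}(\varepsilon^3) \, ,
\]
insert it into \eqref{eq:RTE}, and collect terms of equal order in $\varepsilon$, writing $\rho^{(k)} := \tfrac12\langle f^{(k)}\rangle_\mu$. Recalling that the collision operator $f \mapsto \rho - f$ annihilates $\mu$-independent functions and has zero $\mu$-average, the order $\varepsilon^{-2}$ balance gives $\rho^{(0)} - f^{(0)} = 0$, so $f^{(0)} = f^{(0)}(t,x)$ does not depend on $\mu$ and equals $\rho^{(0)}$; the order $\varepsilon^{-1}$ balance gives $\mu\,\partial_x f^{(0)} = \rho^{(1)} - f^{(1)}$, hence $f^{(1)} = \rho^{(1)} - \mu\,\partial_x\rho^{(0)}$.

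Next I would extract the limit equation from the order $\varepsilon^0$ balance $\partial_t f^{(0)} + \mu\,\partial_x f^{(1)} = \rho^{(2)} - f^{(2)}$ by imposing the solvability (Fredholm) condition, i.e. by averaging over $\mu\in[-1,1]$: the right-hand side is annihilated, the term $\partial_t f^{(0)}$ survives because $f^{(0)}$ is $\mu$-independent, and using $\langle\mu\rangle_\mu = 0$ together with $\langle\mu^2\rangle_\mu = \tfrac23$ the flux reduces to $\langle\mu\,\partial_x f^{(1)}\rangle_\mu = -\tfrac23\,\partial_{xx}\rho^{(0)}$. Dividing the resulting identity by $2$ yields exactly $\partial_t\rho^{(0)} = \tfrac13\,\partial_{xx}\rho^{(0)}$, so $\rho^{(0)} = \rho$ solves \eqref{eq:RTE-AP} and, since $f^{(0)} = \rho^{(0)} = \rho$, the formal limit of $f^\varepsilon$ is $\rho$.

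To turn this into a genuine convergence statement I would first derive an $\varepsilon$-uniform a priori bound. Testing \eqref{eq:RTE} against $f^\varepsilon$ and integrating over $\Omega$, the transport term drops by periodicity and the collision term is controlled by the coercivity of $f \mapsto \rho - f$ on the orthogonal complement of $\mu$-constants, giving
\[
    \tfrac12\,\tfrac{d}{dt}\|f^\varepsilon\|_\omega^2 = -\tfrac{1}{\varepsilon^2}\int_{\Omega_x}\langle (f^\varepsilon - \rho^\varepsilon)^2\rangle_\mu\, dx \le 0 \, .
\]
This yields $\|f^\varepsilon(t)\|_\omega \le \|f_0\|_\omega$ uniformly in $\varepsilon$ and $\|f^\varepsilon - \rho^\varepsilon\|_{L^2((0,T);L^2(\Omega,\omega))} \le C\varepsilon$, so that $g^\varepsilon := \varepsilon^{-1}(f^\varepsilon - \rho^\varepsilon)$ is bounded in $L^2$ and has zero $\mu$-average. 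Writing the micro--macro system (the $\mu$-average of \eqref{eq:RTE} and its orthogonal part), $\rho^\varepsilon$ obeys $\partial_t\rho^\varepsilon + \tfrac12\,\partial_x\langle\mu g^\varepsilon\rangle_\mu = 0$, while the micro equation gives $g^\varepsilon + \mu\,\partial_x\rho^\varepsilon = \mathcal{O}(\varepsilon)$ in the distributional sense. Extracting weakly convergent subsequences and using the strong $L^2$ convergence $f^\varepsilon - \rho^\varepsilon \to 0$, the limit $f^0$ must be $\mu$-independent with $f^0 = \rho^0$; passing to the limit in the micro equation identifies $g^0 = -\mu\,\partial_x\rho^0$, hence $\langle\mu g^0\rangle_\mu = -\tfrac23\,\partial_x\rho^0$, and the macro equation in the limit becomes $\partial_t\rho^0 = \tfrac13\,\partial_{xx}\rho^0$. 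Uniqueness for the heat equation then upgrades subsequential convergence to convergence of the whole family, and $f^\varepsilon = \rho^\varepsilon + \varepsilon g^\varepsilon \to \rho^0 = \rho$.

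The main obstacle is the rigorous passage to the limit in the flux $\langle\mu g^\varepsilon\rangle_\mu$: only weak compactness of $g^\varepsilon$ is available a priori, so one cannot naively pass to the limit inside the product $\mu g^\varepsilon$. This is resolved here because the micro equation expresses $g^\varepsilon + \mu\,\partial_x\rho^\varepsilon$ as $\varepsilon$ times $L^2$-bounded distributional quantities (every problematic derivative of $g^\varepsilon$ carries an explicit $\varepsilon$ prefactor), so testing against smooth functions annihilates those terms in the limit --- a structure special to this linear, single-scale collision operator. A secondary point requiring care is the initial time layer: unless the data $f_0$ is well prepared (already $\mu$-independent), the convergence holds for $t > 0$ with $\rho(0,\cdot) = \tfrac12\langle f_0\rangle_\mu$, and one either works away from $t = 0$ or absorbs the $\mathcal{O}(1)$ layer via a corrector. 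Both points are standard and are treated in the references \cite{larsen1974asymptotic, jin1999efficient, bouchut2000kinetic}.
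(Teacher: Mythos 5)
The paper does not actually prove this theorem: it is stated as a classical result and justified only by citation of \cite{larsen1974asymptotic, jin1999efficient, bouchut2000kinetic}, so there is no in-paper argument to compare against. Your proposal is a faithful reconstruction of the standard argument behind those references, and the computations are correct: the Hilbert cascade gives $f^{(0)}=\rho^{(0)}$, $f^{(1)}=\rho^{(1)}-\mu\,\partial_x\rho^{(0)}$, and the solvability condition with $\langle 1\rangle_\mu=2$, $\langle\mu^2\rangle_\mu=\tfrac23$ yields the coefficient $\tfrac13$; the energy identity $\tfrac12\frac{d}{dt}\|f^\varepsilon\|_\omega^2=-\varepsilon^{-2}\|f^\varepsilon-\rho^\varepsilon\|_\omega^2$ is exact (the cross term vanishes because $\langle\rho^\varepsilon-f^\varepsilon\rangle_\mu=0$), and the micro--macro closure correctly identifies $g^0=-\mu\,\partial_x\rho^0$. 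Two points deserve sharpening if this were to be written out in full. First, your a priori bounds only give weak $L^2$ convergence of $g^\varepsilon$ and of $\rho^\varepsilon$; to pass to the limit in the nonlinear-in-appearance step ``$f^0$ is $\mu$-independent with $f^0=\rho^0$'' and, more importantly, to match the theorem's literal claim of convergence of $f^\varepsilon(t,x,\mu)$ itself (rather than weak convergence of averages), you need strong compactness of $\rho^\varepsilon$ in $L^2_{t,x}$ --- typically obtained via Aubin--Lions from the bound on $\partial_t\rho^\varepsilon$ in $L^2_tH^{-1}_x$ together with a bound on $\partial_x\rho^\varepsilon$ extracted from the micro equation, or via a velocity-averaging lemma. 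Since the problem is linear, the flux $\langle\mu g^\varepsilon\rangle_\mu$ itself passes to the limit under mere weak convergence, so the ``main obstacle'' you flag is actually milder than you suggest; the genuine gap is the strong convergence needed for the conclusion as stated. Second, your caveat about the initial layer for ill-prepared data is correct and worth keeping: the paper's own numerics emphasize that well-prepared data are not required, and the limit $\rho$ must be initialized with $\tfrac12\langle f_0\rangle_\mu$.
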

We observe that in the asymptotic regime $\varepsilon \to 0$ the limiting solution $\rho(t,x)$ becomes independent of the angular variable and can thus be represented as a rank-one function in the angular direction. Since $\mathcal{L}(f^\varepsilon) = \partial_t f^\varepsilon$, in this limit the dynamics also converge to $\partial_t \rho$ which due to angular independence is itself rank-one. Consequently, the evolution of $f^\varepsilon$ approaches a low-rank manifold making negligible the component of $\mathcal{L}(f^\varepsilon)$ orthogonal to the tangent space. Hence, DLRA-assumption~(c)---which requires that the projection of $\mathcal{L}(f^\varepsilon)$ onto the orthogonal complement of the tangent space remains small---is fully consistent with the structure of the solution in this limit. Next, we demonstrate that the proposed GAP scheme satisfies the Asymptotic Preserving (AP) property. Specifically, we show that as \( \varepsilon \to 0 \), the low-rank approximation obtained from the GAP scheme correctly recovers the limiting diffusive equation \eqref{eq:RTE-AP}. This guarantees a consistent transition from the kinetic to the diffusive regime while preserving computational efficiency.
\begin{theorem}[AP for GAP]
    Let \( \kappa^\varepsilon = \kappa^\varepsilon(t,x,\mu) \in \mathcal{M}_r \) be the rank-\(r\) approximation of \eqref{eq:RTE} obtained using the novel GAP scheme, with \( r \geq 2 \). Then,
    
    \[
     	\lim_{\varepsilon \xrightarrow{} 0}   \kappa^\varepsilon(t,x,\mu) = \rho(t,x) \, ,
    \]
    where $\rho = \rho(t,x)$ denotes the solution of \eqref{eq:RTE-AP}.

    \begin{proof}
            Because $\kappa^\varepsilon \in \mathcal M_r$, we recall that it admits the following decomposition
            \[
                \kappa^\varepsilon(t,x,\mu) = \sum_{i,j=1}^r X^\varepsilon_i(t,x) S^\varepsilon_{ij}(t) V^\varepsilon_j(t, \mu) \, .
             \]
            We begin by showing that such a decomposition admits an orthonormal basis \( \mathbf V \) 
            of the form \eqref{vectorV} such that
            \[
                V_1^\varepsilon(t,\mu) \propto 1, \quad V_2^\varepsilon(t,\mu) \propto \mu \, .
            \]
            Omitting \( \varepsilon \) in the notation, the solution of the \( \ell \)-step is given by the evolution equation 
            \[
                \partial_t \ell = \mathcal P_{\mathbf X}[\mathcal L(\ell)] = \mathbf X \langle \mathbf X^\top \mathcal L(\ell) \rangle_x\, ,
            \]
            where $\mathbf X$ is of the form \eqref{vectorX}. 
			Recalling that the solution of the \( \ell \)-step lies in the span of \( \mathbf X \), we obtain
            \[
                \ell = \mathcal P_{\mathbf X}[\ell] = \mathbf X  \mathbf L^\top 
                \quad \text{where} \quad
                \mathbf L := \langle \mathbf X, \ell \rangle_x \in \mathbb{R}^{1 \times r} \, .
            \]
			Since the basis \( \mathbf{X} \) is fixed during this substep, we have \( \partial_t \mathbf{X} = 0 \), and therefore no terms involving \( \partial_t \mathbf{X} \) appear in the subsequent derivation. Thus, the evolution equation for the \( \mathbf L \)-factor is directly obtained as 
            \begin{equation} \label{eq:contL}
                \partial_t \mathbf L + \frac{1}{\varepsilon} \mu \mathbf L \langle \partial_x \mathbf X^\top \mathbf X \rangle_x = \frac{1}{2\varepsilon^2} \Big( \langle \mathbf L \rangle_\mu - 2\mathbf L\Big) \, .
            \end{equation}
			Rearranging the terms, we obtain
            \[
                \mathbf L = \frac{1}{2} \langle \mathbf L \rangle_\mu - \varepsilon \mu \mathbf L\langle \partial_x \mathbf X^\top \mathbf X \rangle_x + \mathcal O(\varepsilon^2)\, .
            \]
            Thus, up to terms of order \( \mathcal{O}(\varepsilon^2) \), the range of \( \mathbf L \) is of rank 2 and given by
            \[
                \mathbf L = [1 \  \ \mu ]   \begin{bmatrix}
                                        \frac{1}{2}\langle \mathbf L \rangle_\mu + \mathcal O(\varepsilon^2) \\ 
                                        -\varepsilon \mathbf L \langle \partial_x \mathbf X^\top \mathbf X \rangle_x \\
                                    \end{bmatrix}
                                    \, .
            \]
            Since the functions \( 1 \) and \( \mu \) are orthogonal to each other, they form a linearly independent basis. Thus, we obtain the desired result.

            \bigskip
            Next, we use the above result to show that the solution of the \( \kappa \)-step converges to the diffusive limit, i.e.
            \[
                 \lim_{\varepsilon \xrightarrow{} 0}   \kappa^\varepsilon(t,x,\mu) =  \rho(t,x) \, .
            \]
	        To simplify the notation, we omit  the subindex \( \varepsilon \) in the following. Consequently, the equation for the \( \kappa \)-step is given by:
            \[
                \partial_t \kappa
                    = \mathcal P_{\mathbf V}[\mathcal L(\kappa)] = \langle \mathcal L(k) \mathbf V \rangle_\mu \mathbf V^\top \, .      
            \]
            Recalling that the solution of the $\kappa$-step lies in the span of $\mathbf V$, we obtain
            \[
                \kappa = \mathcal P_{\mathbf V}[\kappa] = \mathbf K \mathbf V^\top
                \quad \text{where} \quad
                \mathbf K := \langle \kappa, \mathbf V \rangle_\mu \in \mathbb{R}^{1 \times r} \, .
            \]
            Since \( \mathbf{V} \) is fixed during this substep, we have \( \partial_t \mathbf{V} = 0 \), and no additional terms involving \( \partial_t \mathbf{V} \) appear in the subsequent derivation. We therefore obtain the following evolution equation for \( \mathbf{K} \):
            \begin{equation} \label{eq:contK}
                \partial_t \mathbf K +\frac{1}{\varepsilon} \partial_x \mathbf K \langle \mu \mathbf V^\top \mathbf V \rangle_\mu = \frac{1}{2\varepsilon^2} \mathbf K \Big(  \langle \mathbf V \rangle_\mu^\top \langle \mathbf V \rangle_\mu - 2\mathbf I \Big) \, .
            \end{equation}
			We begin by considering the first component of the vector \( \mathbf K \in \mathbb{R}^{1\times r}\) and obtain
            \begin{equation}
            \label{evol_K1}
                \partial_t K_1 + \frac{1}{\varepsilon} \sum_{j\geq 1} \langle \mu V_1 V_j \rangle_\mu \partial_x K_j = \frac{1}{2\varepsilon^2}\Big[ \sum_{j\geq 1}  \langle V_1\rangle_\mu \langle V_j\rangle_\mu K_j - 2K_1 \Big] \, .
            \end{equation}
			Since from the $\ell$-step we obtained \( V_1 \propto 1 \) and \(  V_2 \propto \mu \), we observe that
            \begin{align*}    
                \sum_{j\geq 1} \langle \mu V_1 V_j \rangle_\mu \partial_x K_j 
                    &= \underbrace{\langle \mu V_1 V_1 \rangle_\mu}_{\propto \langle \mu \rangle = 0} \partial_x K_1 
                    + \langle \mu V_1 V_2 \rangle_\mu \partial_x K_2
                    + \sum_{j \geq 3} \underbrace{\langle \mu V_1 V_j \rangle_\mu}_{\propto \langle V_2 V_j \rangle = \delta_{2j}} \partial_x K_j \\
                    &= \langle \mu V_1 V_2 \rangle_\mu \partial_x K_2 \, .
            \end{align*}
           On the right-hand side of \eqref{evol_K1} 
           we obtain that
            \[
                \sum_{j \geq 1} \langle V_1\rangle_\mu \langle V_j\rangle_\mu K_j - 2K_1  
                    = \underbrace{\langle V_1 \rangle_\mu^2}_{=2}K_1 + \cancel{\sum_{j \geq 2} \langle V_1 \rangle_\mu \langle V_j \rangle_\mu K_j}  - 2K_1
                    = 0 \, ,
            \]
            where the term cancels due to the orthogonality of the elements of \( \mathbf V \), i.e.,
            $$\langle V_j \rangle_\mu = \langle 1 \cdot V_j \rangle_\mu \propto \langle V_1 \cdot V_j \rangle_\mu =  \delta_{1j} \, .$$
            To summarize, we conclude that the first component \( K_1 \) satisfies
            \begin{equation}
            \label{evol_K1_2}
                \partial_t K_1 +\frac{1}{\varepsilon} \langle \mu V_1 V_2 \rangle_\mu \partial_x K_2 = 0 \, .
            \end{equation}
            For the remaining components (\( i = 2, \dots, r \)) of \eqref{eq:contK}, we similarly obtain 
            \[
                \partial_t K_i +\frac{1}{\varepsilon} \langle \mu V_i V_1 \rangle_\mu \cdot \partial_x K_1 +\frac{1}{\varepsilon} \sum_{j \geq 2} \langle \mu V_i V_j \rangle_\mu \cdot \partial_x K_j = - \frac{1}{ \varepsilon^2}  K_i \, . 
            \]
			The latter can be restated as
            \[
                K_i = -\varepsilon \langle \mu V_1 V_i \rangle_\mu \partial_x K_1   -\varepsilon \sum_{j \geq 2} \langle \mu V_i V_j \rangle_\mu \cdot \partial_x K_j + \mathcal{O}(\varepsilon^2) \, .
            \]
           For \( i = 2 \), the latter translates to
            \begin{align*}    
                K_2 &=   -\varepsilon \langle \mu V_1 V_2 \rangle_\mu \partial_x K_1
                        -\varepsilon \underbrace{\langle \mu V_2 V_2 }_{\propto \langle \mu^3 \rangle_\mu = 0} \rangle_\mu \partial_x K_2
                        -\varepsilon \sum_{j \geq 3} \langle \mu V_2 V_j \rangle_\mu \cdot \underbrace{\partial_x K_j}_{=\mathcal O(\varepsilon)} + \mathcal{O}(\varepsilon^2) 
                    \\
                    &=  -\varepsilon \langle \mu V_1 V_2 \rangle_\mu \partial_x K_1 + \mathcal{O}(\varepsilon^2)\, .
            \end{align*}
            Thus, the conclusion follows by substituting the above result into \eqref{evol_K1_2}, i.e. 
            \[
                \partial_t K_1 - \langle \mu V_1 V_2 \rangle_\mu^2 \cdot \partial_{xx} K_1 = \mathcal O(\varepsilon^2) \, .
            \]
            To conclude, we observe that
            \[
                \langle \mu V_1 V_2 \rangle_\mu^2 = \frac{1}{\|V_1\|^2} \frac{1}{\|V_2\|^2} \Big(\langle \mu^2 \rangle\Big)^2 =  \frac{1}{2} \cdot \frac{3}{2} \cdot \Big( \frac{2}{3} \Big)^2  = \frac{1}{3} \, .
            \]
           	Thus, in the limit as \( \varepsilon \to 0 \), the AP limit \eqref{eq:RTE-AP} is recovered.
            \qed
    \end{proof} 
\end{theorem}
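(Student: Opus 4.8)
The plan is to establish the AP property of the GAP scheme in two stages, mirroring the two substeps of the algorithm. First I would analyze the $\ell$-step to show that, up to $\mathcal{O}(\varepsilon^2)$ corrections, the angular subspace $\mathbf{V}_1$ it produces contains the span of $\{1,\mu\}$; concretely, one writes the $\ell$-equation $\partial_t \ell = \mathcal{P}_{\mathbf X_0}[\mathcal{L}(\ell)]$, uses the fact (from the PSI lemma) that $\ell$ stays in the span of $\mathbf X_0$, so $\ell = \mathbf X_0 \mathbf L^\top$, and derives the ODE for $\mathbf L$. The dominant $1/\varepsilon^2$ scattering term forces $\mathbf L$ toward its $\mu$-average at leading order, and the $1/\varepsilon$ transport term produces a $\mu$-linear correction, so the Chapman--Enskog-type expansion $\mathbf L = \tfrac{1}{2}\langle \mathbf L\rangle_\mu - \varepsilon \mu\, \mathbf L\langle\partial_x \mathbf X_0^\top \mathbf X_0\rangle_x + \mathcal{O}(\varepsilon^2)$ shows the column space of $\mathbf L$ is spanned by $1$ and $\mu$ up to $\mathcal{O}(\varepsilon^2)$. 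Since $r \geq 2$ and $1 \perp \mu$, after orthonormalization we may take $V_1 \propto 1$, $V_2 \propto \mu$.

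Second I would feed this structural information into the $\kappa$-step. Writing $\kappa = \mathbf K\mathbf V_1^\top$ with $\mathbf V_1$ fixed, the projected equation $\partial_t \kappa = \mathcal{P}_{\mathbf V_1}[\mathcal{L}(\kappa)]$ yields the moment system for $\mathbf K \in \mathbb{R}^{1\times r}$. The key computations are: (i) the scattering term acting on the first component vanishes exactly, because $\langle V_1\rangle_\mu^2 = 2$ cancels the $-2K_1$ and all cross terms $\langle V_1\rangle_\mu\langle V_j\rangle_\mu$ vanish by orthonormality for $j\geq 2$; (ii) in the first-component transport term, $\langle \mu V_1 V_1\rangle_\mu \propto \langle\mu\rangle = 0$ and $\langle \mu V_1 V_j\rangle_\mu \propto \langle V_2 V_j\rangle = \delta_{2j}$, so only $\partial_x K_2$ survives, giving $\partial_t K_1 + \tfrac{1}{\varepsilon}\langle \mu V_1 V_2\rangle_\mu \partial_x K_2 = 0$; (iii) for $i \geq 2$ the $-1/\varepsilon^2$ damping lets one solve $K_i = -\varepsilon\langle\mu V_1 V_i\rangle_\mu \partial_x K_1 + \mathcal{O}(\varepsilon^2)$, and in particular $K_2 = -\varepsilon \langle\mu V_1 V_2\rangle_\mu \partial_x K_1 + \mathcal{O}(\varepsilon^2)$ (using $\langle\mu V_2 V_2\rangle \propto \langle\mu^3\rangle = 0$ and $\partial_x K_j = \mathcal{O}(\varepsilon)$ for $j\geq 3$). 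Substituting (iii) into (i)--(ii) gives the closed equation $\partial_t K_1 - \langle\mu V_1 V_2\rangle_\mu^2 \partial_{xx} K_1 = \mathcal{O}(\varepsilon^2)$, and the normalization $\|V_1\|^2 = 2$, $\|V_2\|^2 = 2/3$ yields the diffusion coefficient $\langle\mu V_1 V_2\rangle_\mu^2 = \tfrac{1}{2}\cdot\tfrac{3}{2}\cdot(\tfrac{2}{3})^2 = \tfrac{1}{3}$. Finally, since $K_i = \mathcal{O}(\varepsilon)$ for $i\geq 2$, the reconstruction $\kappa = \sum_i K_i V_i$ collapses to $K_1 V_1 \to \rho$, the solution of \eqref{eq:RTE-AP}.

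The main obstacle I anticipate is making the two-scale (Chapman--Enskog) expansions rigorous rather than merely formal. In particular, one must justify that the $\mathcal{O}(\varepsilon^2)$ remainders in the $\ell$-step expansion do not contaminate the claim that $\mathbf V_1$ contains $\{1,\mu\}$ after orthonormalization — i.e.\ that a rank-$r$ basis with $r\geq 2$ genuinely captures both the constant and linear modes in the limit — and that the manipulations leading to $\partial_t K_1 - \tfrac13\partial_{xx}K_1 = \mathcal{O}(\varepsilon^2)$ are uniform in the relevant norm. A careful treatment would pin down the dependence of the constants on $t$ and on bounds for $\partial_x \mathbf X$, and would need well-posedness of the limiting heat equation \eqref{eq:RTE-AP} together with a stability estimate to pass from the $\mathcal{O}(\varepsilon^2)$ residual to convergence of $K_1$ to $\rho$. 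A secondary subtlety is the initialization: the $\kappa$-step starts from $\mathcal{P}_{\mathbf V_1}[f(t_0)]$, so one should check that projecting the initial datum onto $\mathbf V_1$ is consistent with (or corrected at $\mathcal{O}(\varepsilon)$ toward) well-prepared data, which is exactly the point the introduction emphasizes — that GAP does not require well-prepared initial data.
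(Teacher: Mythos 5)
Your proposal is correct and follows essentially the same route as the paper: a Chapman--Enskog expansion of the $\ell$-step equation to show that $\operatorname{span}\{1,\mu\}$ is captured by $\mathbf V_1$ up to $\mathcal O(\varepsilon^2)$, followed by the identical moment computation in the $\kappa$-step (vanishing of the scattering term for $K_1$, closure $K_2 = -\varepsilon\langle\mu V_1 V_2\rangle_\mu\partial_x K_1 + \mathcal O(\varepsilon^2)$, and the diffusion coefficient $\langle\mu V_1 V_2\rangle_\mu^2 = 1/3$). The rigor caveats you raise (uniformity of the remainders, initialization) are likewise left at the formal level in the paper's own argument.
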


\begin{remark}
		We recall that the decomposition of \(\kappa^\varepsilon\) into a low-rank representation is not unique, as the tangent space and corresponding basis \(\mathbf{V}\) can be chosen up to rotations within the subspace spanned by the angular modes. However, in the asymptotic regime \(\varepsilon \to 0\), the solution approaches a rank-one profile in angle, and the angular dependence becomes increasingly aligned with the first few Legendre polynomials. In particular, the dominant modes are proportional to \(1\) and \(\mu\), corresponding to the zeroth and first angular moments (density and flux). In the proof only the span of \(\mathbf{V}\) is important. However, to more clearly see what happens it is instructive to choose the following orthonormal basis
		\[
		V_1^\varepsilon(t,\mu) \propto 1, \quad V_2^\varepsilon(t,\mu) \propto \mu \, .
		\]
        In the diffusion limit we then get explicitly $\lim_{\varepsilon \to 0} K_1^{\varepsilon}(t,x) = \rho(t,x)$ and all other components of \(\mathbf{K}\) vanish. If desired, this behavior can be enforced in the numerical implementation by explicitly setting \(V_1^{\varepsilon}(t,\mu) \propto 1\) and \( V_2^{\varepsilon}(t,\mu) \propto \mu\) and normalizing the remaining basis functions against them.
        
\end{remark}

\section{Discretization and numerical implementation}

While the preceding sections have focused on the continuous formulation of the scaled 1x1v Radiative Transfer Equation (RTE), the notation adopted so far naturally extends to the discrete setting. In particular, spatial and angular quantities such as \(\partial_x\) and \( \langle \cdot \rangle_\mu\) can be interpreted as their discrete counterparts without requiring changes to the underlying formulation. It suffices to define spatial and angular discrete domains according to the chosen discretization. This consistency allows us to defer discretization choices until the implementation stage, which is advantageous when analyzing the asymptotic behavior of the method. However, for clarity and to aid reproducibility, we now provide a concrete description of the discretization strategies employed in our numerical experiments. This includes the transition to matrix-based notation, which is more convenient for practical implementation. 
Importantly, the asymptotic analysis applies in the discrete
setting as well, with the only modification that the scheme converges to the
discrete limit of the \(\rho\)-equation \eqref{eq:RTE-AP}, i.e. equation \eqref{eq:RTE-AP} needs to be interpreted with $\partial_{xx}$ being given by the chosen spatial discretization (finite difference, finite elements, ...).
To begin the discretization process, we first recall the scaled 1x1v Radiative Transfer Equation (RTE) in its continuous form. The equation governs the evolution of the intensity \( f^\varepsilon(t, x, \mu) \) and is given by
\[
\partial_t f^\varepsilon + \frac{1}{\varepsilon} \mu \partial_x f^\varepsilon = \frac{1}{\varepsilon^2}(\rho^\varepsilon - f^\varepsilon), \quad \text{where} \quad \rho^\varepsilon = \frac{1}{2} \langle f^\varepsilon \rangle_\mu.
\]
Here, \( \varepsilon \) represents the scaling parameter that characterizes the system, transitioning the equation from a kinetic regime to a diffusive regime as \( \varepsilon \to 0 \). The domain of the equation is \( x \in \Omega_x \subseteq \mathbb{R} \) and \( \mu \in [-1, 1] \), with periodic boundary conditions assumed unless stated otherwise.

To discretize the scaled RTE, we employ \emph{finite differences} for the spatial discretization and \emph{Gauss--Legendre collocation points} for the angular discretization. In the spatial domain, the function \( f^\varepsilon(t, x, \mu) \) is approximated using a uniform grid in \( x \), denoted by \( x_i \in \Omega_x \). For the angular discretization, the angular variable \( \mu \) is discretized using Gauss--Legendre quadrature points, which provide an efficient approximation of integrals over \( \mu \in [-1, 1] \). This approach allows us to obtain a set of discrete values for \( f^\varepsilon(t, x_i, \mu_j) \), where \( \mu_j \) are the collocation points corresponding to the chosen order of quadrature. We select \( N_x \) points for the spatial dimension and \( N_\mu \) points for the angular dimension.

To maintain a matrix-vector notation as proposed in the provided framework, we collect all the discrete values of the function \( f^\varepsilon(t, x, \mu) \) in a matrix. Let \[ \mathbf{F} = \mathbf{F}(t) \in \mathbb{R}^{N_x \times N_\mu} \] represent the matrix of all discrete values, where each entry corresponds to the discretized value of \( f^\varepsilon(t, x_i, \mu_j) \). The goal is to derive a matrix differential equation that represents the semi-discrete form of the scaled 1x1v RTE. This approach allows us to treat the problem in a more compact and efficient form, where the evolution of \( \mathbf{F}(t) \) over time can be described using matrix operations, facilitating the computation of the solution. For the sake of notation, we omit $\varepsilon$ in the matrix formulation.

Now, if we introduce the column vector \( \mathbf{w}_\mu \in \mathbb{R}^{N_\mu \times 1} \) containing the Gauss--Legendre weights required for the quadrature in \( \mu \), the integral quantities in angle can be expressed as
\[
	\rho^\varepsilon = \frac{1}{2} \langle f^\varepsilon \rangle_\mu  \approx \frac{1}{2} \sum_j \omega_j \cdot f^\varepsilon(t, x_i, \mu_j) = \frac{1}{2} \mathbf{F} \mathbf{w}_\mu \mathbf{1}^\top \, ,   
\]
where \( \mathbf{1} \in \mathbb{R}^{N_\mu \times 1 } \) is a column vector of ones, used to express that the \( \rho^{ \varepsilon} \) function corresponds to the $\mu-$averaged value of the function \( f^{ \varepsilon} \) at each point of the space grid. Thus, the discretized scaled 1x1v RTE reduces to the following matrix differential equation
\begin{equation} \label{eq:discRTE}
	\dot{\mathbf{F}} + \frac{1}{\varepsilon} \mathbf{D}_x \mathbf{F} \, \operatorname{diag}(\boldsymbol{\mu}) = \frac{1}{\varepsilon^2} \left(  \mathbf{F} \mathbf{W_\mu}  - \mathbf{F} \right) \, ,
\end{equation}
where \( \mathbf{W_\mu} := \frac{1}{2} \mathbf{w}_\mu \mathbf{1}^\top \in \mathbb{R}^{N_\mu \times N_\mu} \) represents the matrix that computes the average over \( \mu \), and \( \mathbf{D}_x \in \mathbb{R}^{N_x \times N_x} \) represents a suitable discretization of the first spatial derivative, incorporating the periodic boundary conditions. The matrix \( \operatorname{diag}(\boldsymbol{\mu}) \) is a diagonal matrix of size \( N_\mu \times N_\mu \), where each diagonal element corresponds to an element from the vector \( \boldsymbol{\mu} \), representing the discrete angular points in the Gauss--Legendre quadrature.

\subsection{Discretized GAP substeps}
In this section, we present the discretization of the intermediate substeps of the GAP scheme. Building upon the discrete formulation \eqref{eq:discRTE}, we provide the discretization of each substep of the algorithm to facilitate practical computations. The discretization in space and angle is implemented using the methods described in the previous section. Specifically, we utilize finite differences in space and Gauss--Legendre quadrature for the angular part.


We recall that, at the continuous level, the evolution equations for the  \(\ell\)-step of \eqref{eq:contL} and the \(\kappa\)-step of \eqref{eq:contK} in the proposed framework are given by
\begin{align}
\label{sec5:LK}
	\left\{
	\begin{array}{l}
		\partial_t \mathbf L + \frac{1}{\varepsilon} \mu \mathbf L \langle \partial_x \mathbf X^\top \mathbf X \rangle_x = \frac{1}{2\varepsilon^2} \Big( \langle \mathbf L \rangle_\mu - 2\mathbf L\Big) \, , \\[1.2ex]
		\partial_t \mathbf K +\frac{1}{\varepsilon} \partial_x \mathbf K \langle \mu \mathbf V^\top \mathbf V \rangle_\mu = \frac{1}{2\varepsilon^2} \mathbf K \Big(  \langle \mathbf V \rangle_\mu^\top \langle \mathbf V \rangle_\mu - 2\mathbf I \Big) \, .
	\end{array}
	\right.
\end{align}
where the bases \(\mathbf{X}\) and \(\mathbf{V}\) are orthonormal with respect to the scalar products defined on the spatial and angular domains. Before deriving the discretized counterparts of these evolution equations for the substeps, we first introduce the scalar products in discrete space and angle. 
To keep the notation as close as possible to the continuous setting, we define the following discrete inner products for matrices \( \mathbf{X}, \mathbf{Z} \in \mathbb{R}^{N_x \times r} \) and \( \mathbf{V}, \mathbf{M} \in \mathbb{R}^{N_\mu \times r} \):
\[
\langle \mathbf{X}^\top \mathbf{Z} \rangle_x := \mathbf{X}^\top \operatorname{diag}(\mathbf{\Delta x}) \mathbf{Z}, \qquad
\langle \mathbf{V}^\top \mathbf{M} \rangle_\mu := \mathbf{V}^\top \operatorname{diag}(\mathbf{w_\mu}) \mathbf{M} \, .
\]
Here, \( \Delta x \) represents the spacing between points in the spatial grid, and \( \mathbf{w_\mu} \) denotes the Gauss--Legendre weights for the angular discretization. Additionally, \( \operatorname{diag}(\mathbf{\Delta x}) \) represents a diagonal matrix of size \( N_x \times N_x \), where each diagonal element corresponds to the discretization step \( \Delta x \) in space. Similarly, \( \mathbf{w_\mu} \) is a column vector of size \( N_\mu \times 1 \), and \( \operatorname{diag}(\mathbf{w_\mu}) \) represents a diagonal matrix of size \( N_\mu \times N_\mu \), where each diagonal element corresponds to the associated weight in the Gauss--Legendre quadrature. These diagonal matrices induce scalar products that can be used to define a modified Gram-Schmidt procedure for orthonormalizing the vectors in space and angle, respectively, as prescribed at the end of each sub-step in the GAP-scheme.

We begin by deriving the discretized evolution equation for the L-step of \eqref{eq:contL}. We fix the range of the numerical approximation, i.e.
\[
	 \mathbf{F}(t) \approx \mathbf{X}_0 \mathbf{L}(t)^\top \in  \mathcal M_r, 
\]
where the range is orthonormal with respect to the discretized scalar product, i.e.,
\[
	\langle \mathbf{X}_0^\top \mathbf{X}_0 \rangle_x = \mathbf{I}_r  \, .
\]
Inserting the ansatz  $\mathbf{F}(t) \approx \mathbf{X}_0 \mathbf{L}(t)^\top$  into the full discretized equation \eqref{eq:discRTE}, we obtain:
\[
	\mathbf{X}_0 \dot{\mathbf{L}}^\top + \frac{1}{\varepsilon} \mathbf{D}_x \mathbf{X}_0 \mathbf{L}^\top \operatorname{diag}(\boldsymbol{\mu}) = 
	\frac{1}{\varepsilon^2}\big( \mathbf{X}_0 \mathbf{L}^\top \mathbf{W_\mu} - \mathbf{X}_0 \mathbf{L}^\top \big) \, .
\]
To isolate the evolution equation for the \( \mathbf L \)-factor, we multiply both sides from the left by \( \mathbf{X}_0^\top \operatorname{diag}(\mathbf{\Delta x}) \), obtaining
\begin{equation} \label{eq:discL}
	\dot{\mathbf{L}}^\top + \frac{1}{\varepsilon}  \mathbf{A}_x \mathbf{L}^\top \operatorname{diag}(\boldsymbol{\mu}) = 
\frac{1}{\varepsilon^2}\Big(  \mathbf{L}^\top \mathbf{W_\mu} - \mathbf{L}^\top \Big) \, ,
\end{equation}
where the matrix $\mathbf{A}_x \in \mathbb{R}^{r \times r}$ is given by
\[
	\mathbf{A}_x := \mathbf{X}_0^\top \operatorname{diag}(\mathbf{\Delta x}) \mathbf{D}_x \ \mathbf{X}_0   \, .
\]
The matrix \(\mathbf{A}_x\) represents the discrete counterpart of the quantity \(\langle \partial_x \mathbf{X}^\top \mathbf{X} \rangle_x\) appearing in the continuous formulation \eqref{sec5:LK}. 
In a similar manner, by fixing the co-range, i.e., \( \mathbf{F}(t) \approx \mathbf{K}(t) \mathbf{V}_1^\top \), we obtain the discrete equivalent of \eqref{eq:contK} for the \( \mathbf{K} \)-factor by inserting this ansatz into \eqref{eq:discRTE} and multiplying from the right by \( \operatorname{diag}(\mathbf{w_\mu}) \mathbf{V}_1 \):
\begin{equation} \label{eq:discK}
	\dot{\mathbf{K}} + \frac{1}{\varepsilon} \mathbf{D}_x \, \mathbf{K} \, \mathbf{B}_\mu = \frac{1}{ \varepsilon^2} \Big( \mathbf{K}\mathbf{C}_\mu - \mathbf{K}\Big)	 \, ,
\end{equation}
where the matrices $\mathbf{B}_\mu, \mathbf{C}_\mu \in \mathbb{R}^{r \times r}$ are defined by 
\begin{align*}
	\mathbf{B}_\mu &:= \mathbf{V}_1^\top \operatorname{diag}(\boldsymbol{\mu}) \operatorname{diag}(\mathbf{w_\mu}) \mathbf{V}_1  , \\
	\mathbf{C}_\mu &:= \mathbf{V}_1^\top \mathbf{W}_\mu \operatorname{diag}(\mathbf{w_\mu}) \mathbf{V}_1 \, .
\end{align*}
The matrices \(\mathbf{B}_\mu\) and \(\mathbf{C}_\mu\) represent the discrete counterparts of the continuous quantities \(\langle \mu \mathbf{V}^\top \mathbf{V} \rangle_\mu\) and \(\langle \mathbf{V} \rangle_\mu^\top \langle \mathbf{V} \rangle_\mu\) occurring in \eqref{sec5:LK}, respectively. Finally, we reiterate that the orthonormalization of the \( \mathbf K \)-factor and the \( \mathbf L \)-factor must be performed with respect to the scalar products induced by their continuous counterparts. From a discretization perspective, this requires performing the orthonormalization, for example, via a modified Gram-Schmidt procedure, using the scalar products induced by the diagonal matrices \( \operatorname{diag}(\mathbf{\Delta x}) \) in space for the \( \mathbf K \)-factor and \( \operatorname{diag}(\mathbf{w_\mu}) \) in angle for the \( \mathbf L \)-factor, respectively.



\subsection{Time discretization}
%
The time-integration of the scaled 1x1v Radiative Transfer Equation (RTE) presents significant challenges due to the presence of stiff terms, which scale with magnitude \( \frac{1}{\varepsilon^2} \) as \( \varepsilon \) tends to zero. As a result, explicit time-integration schemes may struggle, leading to stability issues and the need for very small time steps. To address these difficulties, an implicit time-integration method or a numerically exact solver is required for the substeps at least when an asymptotic preserving scheme is desired. 

To solve the system, it is sufficient to vectorize the solution of the evolution equations for \( \mathbf{L} \) and \( \mathbf{K} \). This vectorization process leads to the construction of the linear propagation operators, which allows for efficient computation of the system’s solution. We thus obtain from \eqref{eq:discL} and \eqref{eq:discK}, the linear operators
\begin{align*}
	\mathcal{A}_L &:= 
		-\frac{1}{\varepsilon} \, \mathbf{A}_x \otimes \operatorname{diag}(\boldsymbol{\mu}) 
		+ \frac{1}{\varepsilon^2} \Big( \mathbf{I}_r \otimes \mathbf{W}_\mu^\top
		-  \mathbf{I}_r \otimes \mathbf{I}_{N_\mu} \Big) 
		\in \mathbb{R}^{ r N_\mu \times r N_\mu}
		\, , \\
	\mathcal{A}_K &:= 
		-\frac{1}{\varepsilon} \mathbf{B}_\mu^\top \otimes \mathbf{D}_x 
		+\frac{1}{\varepsilon^2} \Big(\mathbf{C}_\mu^\top \otimes \mathbf{I}_{N_x}
		- \mathbf{I}_r \otimes \mathbf{I}_{N_x} \Big) 
		\in \mathbb{R}^{r N_x \times r N_x}
	\, .	
\end{align*}
Using the notations \(\mathcal{A}_L\) and \(\mathcal{A}_K\) introduced above, and letting \({\boldsymbol\ell} \in \mathbb{R}^{r N_\mu}\) and \({\mathbf k} \in \mathbb{R}^{r N_x}\) denote the vectorizations of \(\mathbf{L} \in \mathbb{R}^{r \times N_\mu}\) and \(\mathbf{K} \in \mathbb{R}^{N_x \times r}\), respectively, equations~\eqref{eq:discL} and~\eqref{eq:discK} can be recast in vectorized form as
\[
	\dot{{\boldsymbol\ell}} = \mathcal{A}_L {\boldsymbol\ell}, 
	\qquad 
	\dot{\mathbf{k}} = \mathcal{A}_K \mathbf{k} \, .
\]

Thus, the solution to the \( \mathbf K \)-step and \( \mathbf L \)-step is reformulated in terms of the time-evolution of their vectorized forms. While constructing the provided operators is generally inconvenient, we note that this is not the case here. First, the rank \(r\) is assumed to be small, meaning that the solution to each problem is linear in terms of the discretization size. Second, the operators are sparse, making their construction feasible and compatible with implicit or numerically exact solvers. Specifically, we note that the  \(\bf K\)-step evolution time-integration problem, for instance, can be solved implicitly via
\[
	\mathbf{k}_{n+1} = \mathbf{k}_n + \Delta t \mathcal A_K \mathbf{k}_{n+1} \, ,
\]
where \( \mathbf{k}_n \) represents the vectorized solution \( \mathbf{K}(t_n) \) at time \( t_n = t_0 + n \Delta t \), \( \mathbf{k}_{n+1} \) is the solution at the next time step \( t_{n+1} \). Similarly, this can be done for the \( \mathbf L \)-step. Thus, it is sufficient to solve iteratively the problem  

\[
(\mathbf{I} - \Delta t \mathcal{A}_K) \mathbf{k}_{n+1} = \mathbf{k}_n \, .
\]
Alternatively, the problem can be solved by directly computing the exponential of the operator \( \Delta t \mathcal{A}_K \) applied to the vector \( \mathbf{k}_n \), i.e.

\[
\mathbf{k}_{n+1} = \exp(\Delta t \mathcal{A}_K) \mathbf{k}_n \, .
\]
A similar exponential formulation applies to the \( \mathbf{L} \)-step, where the corresponding matrix exponential is applied to the vectorized form of \( \mathbf{L} \), as specified in Algorithm~\ref{algoGAP_final}:
$$
 {\boldsymbol\ell}_{n+1} = \exp(\Delta t \mathcal{A}_L)  {\boldsymbol\ell}_{n}.
$$


We adopt this time-integration strategy because it preserves the asymptotic-preserving (AP) property, as the numerical flow is evaluated exactly and thus remains consistent with the asymptotic analysis in the limit \(\varepsilon \to 0\). Due to the small dimensionality and sparsity of the linear operator, this exponential action can be efficiently and accurately computed using methods such as Krylov subspace techniques or Leja interpolation. This is the trivial case of what are known as exponential integrators and we refer to the review article \cite{Hochbruck2010} for more details.

\begin{remark}
	A key feature of the GAP scheme, as presented here, is that in the limit the $\ell$-step gives the desired basis, while the $k$-step gives the dynamics of $\rho$. Since both steps are linear it is natural to directly compute the action of the matrix exponential (i.e.~use an exponential integrator). Then there is no CFL type time step restriction of the limit equation and well prepared initial data (as described in \cite{Ding2021}) are not required. In the diffusion limit, the $\ell$-step only prepares the basis. Thus, the time integration error is exclusively due to the $k$-step, which is only limited by the accuracy of the linear algebra routine used to compute the matrix exponential. One can also replace the computation of the exponential by, for example, an L-stable implicit scheme. In this case we still are free of a CFL condition and do not require well prepared initial data. In the limit the order of the implicit method determines how accurately the equation for $\rho$ is solved. In practical implementations, the use of (preconditioned) Krylov subspace methods to evaluate the action of the exponential or the linear solve can significantly enhance both efficiency and robustness. Given the scope of the present work, a more systematic investigation of such technique and its impact on performance is not pursued here.
\end{remark}
 
\subsection{The fully discretized GAP scheme}
In the following, we detail the discretized GAP scheme in the form of the algorithm presented. This version of the scheme is tailored for practical implementation, where time-integration is performed using the exponential method. Using an implicit time-integration method is also possible and requires only minor modifications. The algorithm proceeds by iteratively evolving the low-rank approximations of the \( \mathbf K \)- and \( \mathbf L \)-factors, leveraging the evolution equations that govern their dynamics.

The Modified Gram-Schmidt (MGS) function, employed at multiple stages of the algorithm, takes two arguments: a matrix and the corresponding scalar product. The MGS procedure here is used for orthonormalizing the vectors with respect to a general scalar product. Specifically, in the discretized setting, the scalar product is induced by the diagonal matrices \( \operatorname{diag}(\mathbf{w}_\mu) \) for the angular direction and \( \operatorname{diag}(\mathbf{\Delta x}) \) for the spatial direction. These matrices enable the MGS procedure to account for the geometry of the discretized space and the angular discretization, ensuring that the orthonormalization respects the underlying structure. 

\begin{algorithm}[H]
	\caption{The discretized GAP Scheme}
	\begin{algorithmic}[1]
		
		\STATE \textbf{Input:} \( \mathbf{F}(t_0) = \mathbf{X}_0 \mathbf{S}_0 \mathbf{V}_0^\top \in \mathcal{M}_r, \ N_T \in \mathbb{N} \, . \)
		\vspace{0.5em}
		
		\FOR{$i = 1, \dots N_T$}
		\vspace{0.5em}
		
		\hrule
		
		\vspace{0.2em}
		\STATE \texttt{// \(\mathbf{L}\)-step: Update angular part}
		\vspace{0.2em}
		\hrule
		\vspace{0.5em}
		
		\STATE Set \( {\boldsymbol\ell}_{i-1} = \text{vec}(\mathbf{V}_{i-1} \mathbf{S}_{i-1}^\top) \, . \)
		\vspace{0.5em}

		\STATE Update the matrix \(\mathbf{A}_x\) using \(\mathbf{X}_{i-1}\).
		\vspace{0.5em}
		
		\STATE Let \( \boldsymbol\ell_i = \exp(\Delta t \mathcal{A}_L) \boldsymbol\ell_{i-1} \, . \)
		\vspace{0.5em}
		
		\STATE Matricize \( \boldsymbol\ell_i \) into a matrix \( \mathbf{L}_i \in \mathbb{R}^{N_\mu \times r}  \, . \)
		\vspace{0.5em}

		
		\STATE Let \([ \mathbf{V}_i, \, \cdot \, ] = \text{MGS}(\mathbf{L}_i, \operatorname{diag}(\mathbf{w}_\mu)) \) .
		\vspace{1em}

		\hrule
		\vspace{0.2em}
		\STATE \texttt{// \(\mathbf{K}\)-step: Update spatial part}
		\vspace{0.2em}
		\hrule
		\vspace{0.5em}
		
		\STATE Set \( \mathbf{K}_{i-1} = \mathbf{X}_{i-1} \mathbf{S}_{i-1} (\mathbf{V}_{i-1}^\top \operatorname{diag}(\mathbf{w}_\mu) \mathbf{V}_i)  \, . \)
		\vspace{0.5em}
		
		\STATE Let \( \mathbf{k}_{i-1} = \text{vec}(\mathbf{K}_{i-1})  \, . \)
		\vspace{0.5em}
		
		\STATE Update the matrices \(\mathbf{B}_\mu\) and \(\mathbf{C}_\mu\) via the new \(\mathbf{V}_i\).
		
		\vspace{0.5em}
		
		\STATE Set \( \mathbf{k}_i = \exp(\Delta t \mathcal{A}_K) \mathbf{k}_{i-1}  \, . \)
		\vspace{0.5em}
		
		\STATE Matricize \( \mathbf{k}_i \) into a matrix \( \mathbf{K}_i \in \mathbb{R}^{N_x \times r} \, . \)
		\vspace{0.5em}
		
		\STATE Let \([ \mathbf{X}_i, \mathbf{S}_i ] = \text{MGS}(\mathbf{K}_i, \operatorname{diag}(\mathbf{\Delta x})) \, . \)
		\vspace{1em}
		
		\ENDFOR
		

	\end{algorithmic}
	\label{algoGAP_final}
\end{algorithm}

The value \( N_T \) in the scheme denotes the total number of time iterations required to reach the final time \( T \) using time step \( \Delta t \). The final output of the algorithm is the low-rank matrix 
\[  \mathbf{X}_{N_T} \mathbf{S}_{N_T} \mathbf{V}_{N_T}^\top \approx \mathbf{F}(T)  \, . \]

To conclude, numerical evidence suggests that the chosen time-integration method allows for large time steps without compromising the asymptotic-preserving (AP) behavior or the accuracy of the GAP scheme.
As another key component of the algorithm, we note that when used without pivoting, the standard MGS process is generally not backward stable~\cite{higham2002accuracy}. A more robust approach is to apply MGS to the orthonormal factor obtained from a backward stable QR decomposition -- such as the Householder QR implementation provided in the LAPACK library -- and where appropriate absorb the remaining factors into the other components of the low-rank representation.

\section{Numerical results}


In this section, we assess the performance of the proposed discretized GAP scheme and verify its asymptotic-preserving (AP) property in the diffusive limit. The scheme, described in the previous section, is implemented in \textsc{Matlab} R2024b and executed on a workstation equipped with an Intel i7-11700 processor and 32\,GB of RAM. For time integration, the action of the matrix exponentials of the linear operators \( \mathcal{A}_L \) and \( \mathcal{A}_K \) is computed using \textsc{Matlab}'s built-in \texttt{expmv} function, which allows for accurate and efficient integration without introducing stiffness-related constraints. 
In the following, the spatial operators \( \partial_x \) and \( \partial_{xx} \) are discretized using centered finite differences on a uniform grid with \( N_x \) points under periodic boundary conditions and denoted by \( \mathbf{D}_x \) and \( \mathbf{D}_{xx} \), respectively.

\subsection{Asymptotic-preserving property in the diffusive limit}

For this numerical experiments, we select the following parameters for the discretization: \( N_x = 1000 \), \( N_\mu = 100 \), and the rank \( r = 5 \). The domain is \( \Omega_x = [0, 2] \), and the initial condition is chosen as \( f_0(x, \mu) = \left( (x-1)^2 + 1 \right) \left( 1 + \mu^2 \right) \). The final time is \( T = 1 \), and the time-step size is \( \Delta t = 0.1 \). To explore the asymptotic behavior, we let the scaling parameter \( \varepsilon \) vary from \( 1 \) to \( 10^{-4} \). The large value \( N_x = 1000 \) is chosen to illustrate the viability of the numerical scheme under high spatial resolution. This setting is representative of large-scale applications where fine discretization is required. The choice \( N_\mu = 100 \) provides a reasonable angular resolution for the regime considered.

\begin{figure}
	\centering
    \includegraphics[width=\textwidth]{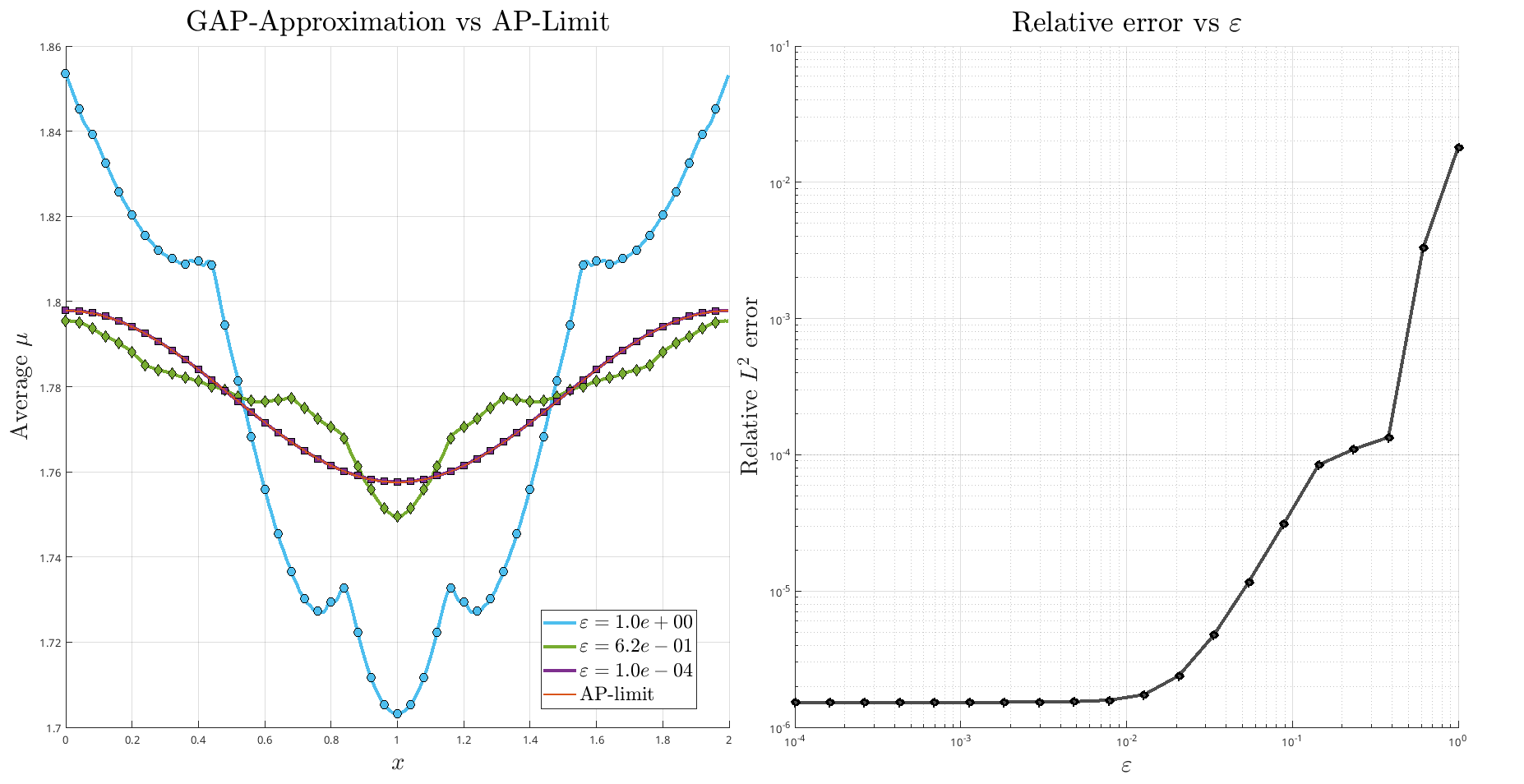}
    \caption{
    Left: GAP numerical approximation for representative values of \(\varepsilon\), including the AP-limit.
    Right: Relative \(L^2\) error between the GAP approximation and the discrete AP-limit as a function of \(\varepsilon\). The parameters used are \(N_x = 1000\), \(N_\mu = 100\), rank \(r = 5\), and time step \(\Delta t = 0.1\). The exponential of the linear operators \(\mathcal{A}_L\) and \(\mathcal{A}_K\) is computed using the \texttt{expmv} function.}
	\label{fig:GAP_limit}
\end{figure}

Figure~\ref{fig:GAP_limit} illustrates the behavior of the GAP method across different regimes. The left panel shows the discrete \(\mu\)-averaged density \( \langle y^{\rm GAP}(T) \rangle_\mu \) at final time for various \( \varepsilon \)-values compared with the discrete diffusion-limit \( \rho(T) = \exp\left( \tfrac{T}{3} \mathbf{D}_{xx} \right) \langle f_0 \rangle_\mu \). 
The right panel shows the spatial \( L^2(\Omega_x) \) relative error at final time as a function of \( \varepsilon \).

The results shown in Figure~\ref{fig:GAP_limit} confirm that the low-rank numerical approximation produced by the GAP scheme converges to the discrete diffusion limit as \(\varepsilon \to 0\), for a fixed set of numerical parameters, thereby demonstrating the asymptotic-preserving (AP) property of the method. When the action of the exponential of the linear operators \(\mathcal{A}_L\) and \(\mathcal{A}_K\) is computed using the \texttt{expmv} function, the scheme remains stable and accurate even for relatively large time steps such as \(\Delta t = 0.1\). 
%
%
The relative error saturates around \(10^{-6}\), consistent with the expected spatial accuracy of the GAP scheme. The transport operator in GAP is discretized using a second-order centered finite difference scheme on a uniform grid, which in the diffusive limit induces a discrete approximation that differs from the standard three-point stencil used for the AP-limit --- computed using \texttt{expm} in \textsc{Matlab}. This mismatch in spatial discretization explains the asymptotic error plateau.

We remark that the chosen time-step size \( \Delta t = 0.1 \) is orders of magnitude larger than what would be allowed by a standard CFL condition in the diffusive regime. The observed stability and accuracy of the GAP scheme in this setting further demonstrate its asymptotic preserving nature and highlight the advantage of using exponential integrators that avoid parabolic-type stiffness constraints.

\subsection{Time-step convergence at fixed $\varepsilon$}

	To further assess the temporal accuracy of the scheme in the kinetic regime, we compute the solution at fixed \(\varepsilon = 1\) while varying the time-step size \(\Delta t\). Here, the domain remains \( \Omega_x = [0, 2] \), and we set \(N_x = 200\), \(N_\mu = 100\), and rank \(r = 10\), using the initial condition
	\[
	f_0(x, \mu) = 1 + \sum_{k=1}^{10} 10^{-k} \sin(k\pi x)\, \mu^k.
	\]

	\begin{figure} \label{fig:pointwiseErrorConvergence}
		\includegraphics[width=\textwidth]{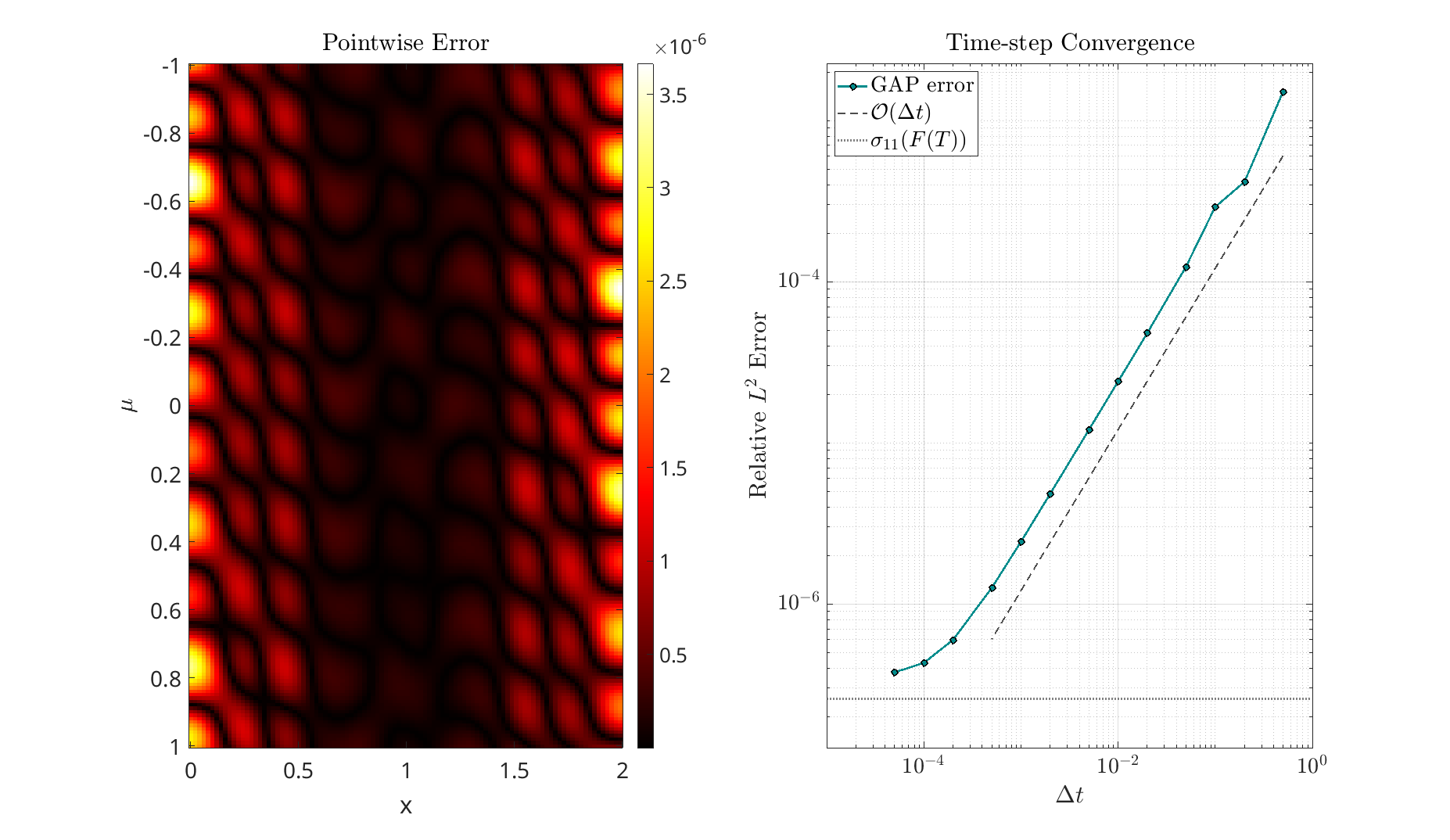}
		\caption{
			Left: Pointwise absolute error between the reference solution and the low-rank approximation at final time \( T = 1 \) for the scaled radiative transfer equation. 
			The low-rank solution is computed with rank \( r = 10 \) and time step \( \Delta t \approx 10^{-4} \), using \texttt{expmv} for time integration. 
			Right: Convergence of the relative \( L^2 \) error as a function of the time step size \( \Delta t \), illustrating first-order accuracy of the scheme in the kinetic regime. 
			The dashed line shows the reference slope \( \mathcal{O}(\Delta t) \), while the horizontal line indicates the saturation level associated with the \((r+1)\)th singular value \( \sigma_{11}(F(T)) \) of the reference solution.}
	\end{figure}

	The right panel of Figure~\ref{fig:pointwiseErrorConvergence} shows that the relative \( L^2 \) error scales linearly with \( \Delta t \), as expected from the first-order accuracy of the GAP scheme, until it saturates at the level of the \((r+1)\)th singular value \( \sigma_{11}(F(T)) \). 
	Here, \( F(T) \) denotes the reference solution of \eqref{eq:discRTE} at final time computed with the same numerical parameters and a time step \( \Delta t = 0.01 \) using numerically exact time-integration via \texttt{expmv} in \textsc{Matlab}.
	This saturation reflects the intrinsic low-rank approximation error: the contribution of neglected singular modes beyond rank \(r\).
	The left panel visualizes the pointwise error at final time \(T = 1\) for \(\Delta t \approx 10^{-4}\). The pointwise error plots show that the discrepancy between the low-rank approximation and the reference solution remains uniformly small across the domain.

\section*{Acknowledgments}
Part of this work was carried out during the workshop \emph{Computational Challenges and Optimization in Kinetic Plasma Physics}, held at the Institute for Mathematical and Statistical Innovation (IMSI), University of Chicago, USA. The support and hospitality of IMSI are gratefully acknowledged. This work has been carried out within the framework of the EUROfusion Consortium, funded by the European Union via the Euratom Research and Training Programme (grant agreement No 101052200 EUROfusion). Views and opinions expressed are however those of the authors only and do not necessarily reflect those of the European Union or the European Commission. The work has been supported by the French Federation for Magnetic Fusion Studies (FR- FCM).

\bibliographystyle{siamplain}
\bibliography{references}
\end{document}